\newtheorem{theorem}{Theorem}[section]
\newtheorem{lemma}[theorem]{Lemma}
\newtheorem{proposition}[theorem]{Proposition}
\theoremstyle{definition}
\newtheorem{definition}[theorem]{Definition}
\newtheorem{remark}[theorem]{Remark}
\def\gor#1{\widetilde{#1}}
\def\bar#1{\overline{#1}}
\def\h(#1,#2){\mbox{Hom}\left(#1,#2\right)}
\def\t(#1,#2){\mbox{Tor}\left(#1,#2\right)}
\def\e(#1,#2){\mbox{Ext}\left(#1,#2\right)}
\def\mod{\;\mathsf{mod}\;}
\def\C{\mathbb{C}}
\def\M{\mathbb{M}}
\def\N{\mathbb{N}}
\def\R{\mathbb{R}}
\def\X{\mathbb{X}}
\def\Z{\mathbb{Z}}
\def\dd{\mathbf{d}}
\def\dgm{\mathsf{dgm}}
\def\<{\langle}
\def\>{\rangle}
\DeclareMathOperator*{\argmax}{\arg\!\max\;}
\title[Sparse Circular Coordinates]{Sparse Circular Coordinates via Principal $\mathbb{Z}$-Bundles}
\author[Jose Perea]{Jose A. Perea}
\thanks{This work was partially supported by the NSF under grant DMS-1622301 and  DARPA under grant HR0011-16-2-003}
\address{
\shortstack[l]{
Department of Computational Mathematics, Science \& Engineering \\
Department of Mathematics, \\
Michigan State University \\
East Lansing, MI, USA.}}
\email{joperea@msu.edu}
\subjclass[2010]{Primary 55R99, 55N99, 68W05; Secondary 55U99}
\keywords{Circular Coordinates, Persistent cohomology, Principal Bundles, Classifying Map}
\begin{document}

\bibliographystyle{abbrvnat}
\begin{abstract}
We present in this paper an application of the theory of principal bundles
to the problem of nonlinear dimensionality reduction in data analysis.
More explicitly, we derive, from a  1-dimensional persistent cohomology computation,
explicit formulas for
circle-valued functions on data
with nontrivial underlying topology.
We  show that  the language of principal bundles leads to coordinates defined
on an open neighborhood of the data, but computed using only a smaller subset of landmarks.
It is in this sense that the coordinates are sparse.
Several data examples are presented, as well as  theoretical results underlying  the construction.
\end{abstract}

\maketitle
\section{Introduction}
The curse of dimensionality
refers to a host of phenomena inherent to the increase in the number of features describing the elements of a data sets.
For instance, in statistical learning, the number of training data points needs to grown roughly exponentially in the
number of features, in order for learning algorithms to generalize correctly in the absence of other priors.
A deeper manifestation of the curse of dimensionality is the deterioration of the concept of ``nearest neighbors'' in high-dimensional Euclidean space;
for as the dimension increases, the distance between any two points is roughly the same \cite{radovanovic2010hubs}.
One of the most popular priors in data science
is the ``low intrinsic dimensionality'' hypothesis.
It contends that while the apparent number of features  describing each data  point
 (e.g., the number of pixels in an image) might be large, the effective number of degrees of freedom (i.e., the intrinsic dimensionality) is often much lower.
Indeed,  images generated at random will hardly depict a cat or a natural scene.

Many dimensionality reduction schemes have been proposed in
the literature
  to leverage the ``low intrinsic dimensionality''  hypothesis,
each   making explicit or implicit use of  likely characteristics of the data.
For instance, Principal Component Analysis \cite{jolliffe2002principal} and other linear dimensionality reduction methods,
rely on the existence of a low-dimensional linear representation accounting for most of the variability in the data.
Methods such as Locally Linear Embeddings \cite{roweis2000nonlinear} and
Laplacian EingenMaps \cite{belkin2003laplacian}, on the other hand, presuppose  the existence of a manifold-like object parametrizing the underlying data space.
Other algorithms, like  Multidimensional Scaling \cite{kruskal1964multidimensional} and Isomap \cite{tenenbaum2000global},
attempt to preserve distances between data points while providing low-dimensional reconstructions.

Recently, several new methods for nonlinear dimensionality reduction have emerged from the field of computational topology \cite{de2011persistent, singh2007topological, perea2018multiscale}.
The idea being that if the underlying space from which the data has been sampled has a particular shape,
then this information can be used to generate appropriate low-dimensional representations.
The circular coordinates of de Silva,  Morozov, and Vejdemo-Johansson \cite{de2011persistent}  pioneered the use of persistent
cohomology as a way to measure the shape of a data set, and then produce circle-valued coordinates reflecting the underlying nontrivial topology.
Their algorithm goes as follows.
Given  a finite metric space $(X,\dd)$ --- the data --- and a scale $\alpha > 0$ so that  the Rips complex
\[
R_{\alpha}(X)  :=\{ \sigma \subset X : \sigma \neq \emptyset\;\; \mbox{ and }\;\; \mathsf{diam}(\sigma) < \alpha\}
\]
has a nontrivial  integer cohomology class $[\eta] \in H^1(R_\alpha(X); \Z)$
--- this is determined from the persistent cohomology of the Rips filtration $\mathcal{R}(X) = \{R_\epsilon(X)\}_{\epsilon \geq 0}$ ---
a linear least squares optimization   (of size the number of vertices by the number of edges of $R_\alpha(X)$) is solved,
in order to construct a function $f_\eta : X \longrightarrow S^1 \subset \C$ which, roughly,
 puts one of the generators of $H^1(S^1;\Z)\cong \Z$ in correspondence with $[\eta] \in H^1(R_\alpha(X);\Z)$.

\subsection{Our Contribution} Two drawbacks of the perspective presented in \cite{de2011persistent} are:
(1) the method requires a persistent cohomology
calculation, as well as a least squares optimization,  on the Rips filtration of the entire data set $X$. This is computationally expensive and may limit  applicability to small-to-medium-sized data.
(2) once the function $f_\eta$ has been computed, it is only defined on the data points from $X$ used for its construction.
Here we show  that these drawbacks can be addressed effectively with ideas from
principal $\Z$-bundles. In particular, we show  that it is possible to construct circular coordinates on $X$ from the Rips filtration
on a subset of landmarks $L\subset X$, Proposition \ref{prop:unnecesary},
with similar classifying properties as in \cite{de2011persistent}, Theorem \ref{thm:ClassifyingForumlaHarmonic},
and that said coordinates will be defined on an open neighborhood of $L$ containing $X$.
We call these functions ``sparse circular coordinates''.

\subsection{The Sparse Circular Coordinates Algorithm}\label{sec:SCCalgorithm}
Let us describe next the steps needed to construct said coordinates.
The rest of the paper is devoted to the theory behind these choices:
\begin{enumerate}
\item Let $(X,\dd)$ be the input data set; i.e. a finite metric space.
    Select a set of landmarks $L = \{\ell_1\ldots, \ell_N\} \subset X$, e.g. at random or  via \verb"maxmin" sampling,
    and let \[r_L :=  \max\limits_{x\in X}\, \min\limits_{\ell \in L}\,\dd(x,\ell) \] be the radius of coverage.
    In particular, $r_L$ is the Hausdorff distance between $L$ and $X$.
\item Choose a prime $q>2$ at random and compute the 1-dimensional persistent cohomology $PH^1(\mathcal{R}(L) ; \Z/q)$
with coefficients in $\Z/q$, for the Rips filtration on the landmark set $L$.
    Let $\dgm(L)$ be the resulting persistence diagram.
\item If there exists $(a,b)\in \dgm(L)$
    so that $ \max\{a, r_L\} < \frac{b}{2}$, then let
    \[\alpha = t\cdot\max\{a, r_L\} + (1-t) \frac{b}{2}
    \;\;\; \;\; , \;\;\mbox{ for some }\;\;0< t<1\]
    Let $\eta' \in Z^1(R_{2\alpha}(L) ; \Z/q)$ be a cocyle representative for the persistent cohomology class
corresponding to $(a,b)\in \dgm(L)$.
    If $t$ is closer to 1, then the circular coordinates are defined on a larger domain; however,  this  makes  step (5) below more computationally intensive.
\item Lift $\eta' : C_1(R_{2\alpha}(L);\Z)\longrightarrow \Z/q = \{0,\ldots, q-1\}$ to an integer cocycle
    $\eta \in Z^1(R_{2\alpha}(L);\Z)$. That is, one for which $\eta' - (\eta \mod q)$ is a coboundary in $C^1(R_{2\alpha}(L);\Z/q)$.
    An explicit  choice (that works in practice for a prime $q$ chosen at random) is the integer  cochain:
\[
\eta(\sigma) =
\left\{
  \begin{array}{lcr}
    \eta'(\sigma) & \hbox{if } & \eta'(\sigma) \leq \frac{q-1}{2}  \\[.25cm]
    \eta'(\sigma) - q & \hbox{ if }& \eta'(\sigma) > \frac{q-1}{2}
  \end{array}
\right.
\]
\item Choose positive weights for the vertices and edges of $R_{2\alpha}(L)$ --- e.g. all equal to one ---
   and let $d_{2\alpha}^+ :C^1(R_{2\alpha}(L);\R) \longrightarrow C^0(R_{2\alpha}(L);\R)$
 be the (weighted) Moore-Penrose pseudoinverse (solving weighted linear least squares problems) for the coboundary map
    \[
d_{2\alpha} : C^0(R_{2\alpha}(L);\R) \longrightarrow C^1(R_{2\alpha}(L);\R)\]
If $\iota: \Z \hookrightarrow \R$ is the inclusion, let
\[
\tau = - d_{2\alpha}^+\big(\iota\circ\eta\big)
\hspace{1cm}\mbox{ and } \hspace{1cm}
\theta =
(\iota\circ \eta) \, +\, d_{2\alpha}\left(\tau\right)
\]
\item Denote by $\tau_j \in \R$  the value of $\tau$ on the vertex $\ell_j\in L$,
and by $\theta_{jk}\in \R$  the value of $\theta$ on the oriented edge $[\ell_j, \ell_k] \in R_{2\alpha}(L)$.
If  we let
\[
\varphi_j(b) =
\frac{|\alpha - \dd(\ell_j, b)|_+}{ \sum\limits_{k =1}^N |\alpha - \dd(\ell_k,b)|_+}
\;\;\;\;\; \;\; \mbox{ where }\;\;\;\;\;\;\;
|r|_+ = \max\{r,0\}, \;\; r\in \R
\]
and $B_\alpha(\ell_k)$ denotes the open ball of radius $\alpha>0$ centered at $\ell_k\in L$,
then   the sparse circular coordinates are defined by the formula:
\begin{equation}\label{eq:Formula}
\boxed{
\begin{array}{cccl}
  h_{\theta,\tau}: &   \bigcup\limits_{k =1 }^N B_{\alpha}(\ell_k) & \longrightarrow & S^1 \subset \C \\
   & B_\alpha(\ell_j)\ni b & \mapsto &  \exp\left\{ 2\pi i\left ( \tau_j +    \sum\limits_{k=1}^N \varphi_k(b) \theta_{jk}  \right) \right\} \\[.3cm]
\end{array}
}
\end{equation}
If   $X $ is a subspace of an ambient metric space $\M$, then   the $B_\alpha(\ell_k)$'s
can be taken to be ambient metric balls.
This is why we call the circular coordinates \emph{sparse}; $h_{\theta,\tau}$ is computed using only $L$, but
its domain of definition is an open subset of $\M$ which, by construction, contains all of $X$.
\end{enumerate}

\subsection{Organization} We start in Section \ref{sec:Preliminaries} with a few preliminaries on principal bundles,
highlighting the main theorems needed in later parts of the paper.
We assume
familiarity with persistent cohomology (if not, see \cite{perea2018brief}),
as well as the definition of \v{C}ech cohomology with coefficients in a presheaf (see for instance \cite{miranda1995algebraic}).
Section \ref{sec:Formulas} is devoted to deriving the formulas --- e.g. (\ref{eq:Formula}) above --- which turn a 1-dimensional integer cohomology class
into a circle-valued function.
In Section \ref{sec:RealData} we describe how to make all this theory applicable to real data sets.
We present several experiments in Section \ref{sec:Experiments} with both real and synthetic data,
and end in Section \ref{sec:FinalRemarks} with a few final remarks.

\section{Preliminaries}\label{sec:Preliminaries}

\subsection{Principal Bundles}
We present here a terse introduction to principal bundles, with the main results we will need later in the paper.
In particular, the connection between principal bundles and \v{C}ech chomology, which allows for explicit computations,
and their classification theory via  homotopy classes of maps to classifying spaces.
The latter description will be used to generate our sparse circular coordinates.
We refer the interested reader to \cite{husemoller1966fibre} for a more thorough presentation.

Let $B$ be a connected and paracompact\footnote{ So that partitions of unity always exist} topological space with basepoint $b_0 \in B$.

\begin{definition}
A pair $(p, E)$, with $E$ a topological space and  $ p :E \longrightarrow B$ a continuous map,
is said to be a fiber bundle over $B$ with fiber $F = p^{-1}(b_0)$, if:
\begin{enumerate}
  \item $p$ is surjective
  \item Every point $b\in B$ has an open neighborhood $U \subset B$ and a homeomorphism $\rho_U : U\times F \longrightarrow p^{-1}(U)$,
  called a local trivialization around $b$,
    so that $p \circ \rho_U(b',e) = b'$ for every $(b',e) \in U \times F$.
\end{enumerate}
\end{definition}

The spaces $E$ and $B$ are called, respectively, the total and base space of the bundle, and $p$ is called the projection map.

\begin{definition}
Let $G$ be an abelian topological group whose operation we write additively.
A fiber bundle $p : E \longrightarrow B$ is said to be a principal $G$-bundle if:
\begin{enumerate}
\item The total space $E$ comes equipped with a fiberwise  free  right $G$-action. That is, a continuous map
\[\cdot : E\times G \longrightarrow E\]
satisfying the right-action axioms, with
  $p(e\cdot g) = p(e)$ for every pair $(e,g)\in E\times G$, and for which  $e\cdot g = e$ only if $g$ is the identity of $G$.
\item The induced fiberwise $G$-action $p^{-1}(b)\times G \longrightarrow p^{-1}(b)$ is transitive for every $b\in B$
in the base space.
\item The local trivializations  $\rho_U : U\times F \longrightarrow p^{-1}(U)$ can be chosen to be $G$-equivariant:
that is, so that  $\rho_U(b,e\cdot g) = \rho_U(b,e)\cdot g$,
for every $(b,e,g)\in U\times F \times G$.
\end{enumerate}
Two principal $G$-bundles  $p_j : E_j \longrightarrow B$, $j=1,2$, are said to be isomorphic,
if there exists a $G$-equivariant homeomorphism $\Phi: E_1 \longrightarrow E_2$ so that $p_2 \circ \Phi = p_1$.
This defines an equivalence relation on  principal $G$-bundles over $B$, and the set of isomorphism classes
is denoted $\mathsf{Prin}_G(B)$.
\end{definition}

Given a principal $G$-bundle $p: E \longrightarrow B$ and a system of ($G$-equivariant) local trivializations
$\left\{\rho_j : U_j \times F \longrightarrow p^{-1}(U_j)\right\}_{j\in J}$,
 we have that
\begin{equation}\label{eq:TransFunct}
\rho_k^{-1}\circ \rho_j : (U_j \cap U_k)\times F \longrightarrow (U_j \cap U_k)\times F
\end{equation}
is a $G$-equivariant homeomorphism whenever $U_j \cap U_k \neq \emptyset$.
Since the $G$-action on $E$ is fiberwise free and fiberwise transitive, then $\rho_k^{-1}\circ \rho_j$
induces a well-defined continuous map
\[
\rho_{jk} : U_j \cap U_k \longrightarrow G \;\; \;\; \; \;\;\;\;  j,k \in J
\]
defined by the equation
\begin{equation}\label{eq:TransFunctValues}
\rho_k^{-1}\circ \rho_j (b, e) = (b, e \cdot \rho_{jk}(b)) \;\;\; , \;\;\; \mbox{ for all }(b,e) \in (U_j\cap U_k)\times F.
\end{equation}


The  $\rho_{jk}$'s are called the transition functions for the $G$-bundle $(p, E)$
corresponding to the system of local trivializations $\{ \rho_j\}_{j\in J}$.
In fact, these transition functions define an element in the \v{C}ech cohomology  of $B$.
Indeed, for each open set $U\subset B$  let $\mathsf{Maps}(U,G)$ denote the set of continuous maps from $U$ to $G$.
Since $G$ is an abelian group, then so is $\mathsf{Maps}(U,G)$, and if $V \subset U$ is another open set,
then  precomposing with the inclusion $V\hookrightarrow U$ yields a restriction map
\[
\iota_{U,V} : \mathsf{Maps}(U,G) \longrightarrow \mathsf{Maps}(V,G)
\]
This defines a sheaf $\mathscr{C}_G$ of abelian groups over $B$, with $\mathscr{C}_G(U) := \mathsf{Maps}(U,G)$,
called the sheaf of $G$-valued continuous functions on $B$.
It follows that the transition functions (\ref{eq:TransFunct})
define an element   $\rho = \{\rho_{jk}\} \in \check{C}^1(\mathcal{U}; \mathscr{C}_G)$
in the \v{C}ech 1-cochains of the cover $\mathcal{U} =\{U_j\}_{j\in J}$ with coefficients in the sheaf $\mathscr{C}_G$.
Moreover,

\begin{proposition}
The transition functions $\rho_{jk}$ satisfy
the  cocycle condition
\begin{equation}\label{eq:CocycleCondition}
\rho_{j\ell}(b) = (\rho_{jk}+\rho_{k\ell})(b)  \;\;\;\mbox{ for all } \;\;\; b\in U_j \cap U_k \cap U_\ell
\end{equation}
In other words,
$\rho = \{\rho_{jk}\}\in \check{Z}^1(\mathcal{U}; \mathscr{C}_G)$
is a \v{C}ech cocycle.
\end{proposition}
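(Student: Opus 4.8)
The plan is to extract the cocycle identity directly from the defining equation (\ref{eq:TransFunctValues}), using only the associativity axiom of the right $G$-action together with the fact that this action is fiberwise free. The one structural observation that does all the work is that a composite of two ``change of trivialization'' maps is again a change of trivialization: on the triple overlap one has the tautology
\[
\rho_\ell^{-1}\circ \rho_j \;=\; \left(\rho_\ell^{-1}\circ\rho_k\right)\circ\left(\rho_k^{-1}\circ\rho_j\right)
\]
of $G$-equivariant self-homeomorphisms of $(U_j\cap U_k\cap U_\ell)\times F$, since $\rho_k\circ\rho_k^{-1}$ is the identity.

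First I would fix a point $b\in U_j\cap U_k\cap U_\ell$; restricting to this triple intersection is precisely what guarantees that the three local trivializations $\rho_j,\rho_k,\rho_\ell$, and hence the three transition functions $\rho_{jk},\rho_{k\ell},\rho_{j\ell}$, are all simultaneously defined at $b$. I would also fix any element $e\in F$ (the fiber is nonempty since $p$ is surjective, and $p(e\cdot g)=p(e)$ keeps all products below inside $F$). Evaluating the right-hand side of the displayed identity at $(b,e)$ and applying (\ref{eq:TransFunctValues}) twice gives
\[
\rho_\ell^{-1}\circ\rho_j(b,e)\;=\;\rho_\ell^{-1}\circ\rho_k\big(b,\, e\cdot\rho_{jk}(b)\big)\;=\;\big(b,\,(e\cdot\rho_{jk}(b))\cdot\rho_{k\ell}(b)\big)\;=\;\big(b,\, e\cdot(\rho_{jk}(b)+\rho_{k\ell}(b))\big),
\]
where the last equality is the associativity of the right $G$-action, with the group operation written additively. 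On the other hand, evaluating the left-hand side directly via (\ref{eq:TransFunctValues}) yields $\rho_\ell^{-1}\circ\rho_j(b,e)=\big(b,\,e\cdot\rho_{j\ell}(b)\big)$. Comparing the $F$-coordinates of these two expressions gives $e\cdot\rho_{j\ell}(b)=e\cdot\big(\rho_{jk}(b)+\rho_{k\ell}(b)\big)$.

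To conclude, I would invoke that the $G$-action on $E$ is fiberwise free: if $e\cdot g=e\cdot g'$, then $e\cdot(g-g')=e$, which forces $g-g'$ to be the identity of $G$, i.e. $g=g'$. Applying this with $g=\rho_{j\ell}(b)$ and $g'=\rho_{jk}(b)+\rho_{k\ell}(b)$ proves (\ref{eq:CocycleCondition}) at $b$; since $b$ was an arbitrary point of the triple overlap, the identity holds on all of $U_j\cap U_k\cap U_\ell$. I do not anticipate any genuine obstacle: the content is entirely formal once (\ref{eq:TransFunctValues}) is in hand, and the only points deserving a line of care are that the fiber is nonempty (so that a test element $e$ exists), and that it is \emph{freeness} of the action — not transitivity or equivariance — that licenses the final cancellation. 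Transitivity and equivariance have already been used, earlier, to ensure that each $\rho_{jk}$ is a well-defined $G$-valued function in the first place.
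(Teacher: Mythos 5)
Your argument is correct and complete: the paper states this proposition without proof, and the factorization $\rho_\ell^{-1}\circ\rho_j = (\rho_\ell^{-1}\circ\rho_k)\circ(\rho_k^{-1}\circ\rho_j)$ combined with two applications of (\ref{eq:TransFunctValues}), associativity of the right action, and freeness for the final cancellation is exactly the standard argument the paper is implicitly invoking. Your closing remark correctly apportions the roles of transitivity/equivariance (well-definedness of each $\rho_{jk}$) versus freeness (uniqueness, hence the cancellation), so there is nothing to add.
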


If $\{\nu_r : V_r\times F \longrightarrow p^{-1}(V_r) \}_{r\in R}$ is another system of local trivializations
with induced \v{C}ech cocycle $\nu = \{\nu_{rs}\} \in \check{Z}^1(\mathcal{V};\mathscr{C}_G)$,
and    \[\mathcal{W} = \{U_j \cap V_r\}_{(j,r)\in J\times R}\] then one can check that the difference
$\rho - \nu $ is a coboundary in $ \check{C}^1(\mathcal{W}; \mathscr{C}_G)$.
Since $\mathcal{W}$ is a refinement for both $\mathcal{V}$ and $\mathcal{U}$,
it follows that the  $G$-bundle $p: E \longrightarrow B$ yields a well-defined element
$p_E \in \check{H}^1(B;\mathscr{C}_G)$. Moreover, after passing to isomorphism classes of $G$-bundles we get that
\begin{lemma}\label{lemma:GbundlesToCohomology}
The function
\[
\begin{array}{ccc}
\mathsf{Prin}_G(B) &\longrightarrow & \check{H}^1(B;\mathscr{C}_G) \\
\; [(p,E)]&\mapsto& p_E
\end{array}
\]
is well-defined and injective.
\end{lemma}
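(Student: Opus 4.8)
The plan is to establish the two assertions separately: first that the assignment $[(p,E)] \mapsto p_E$ is well-defined on isomorphism classes, and then that it is injective. For well-definedness, the paragraph preceding the lemma already shows that a single $G$-bundle $(p,E)$ determines a well-defined class $p_E \in \check H^1(B;\mathscr C_G)$ independent of the chosen system of local trivializations, by comparing any two systems over the common refinement $\mathcal W = \{U_j \cap V_r\}$ and checking that the difference of their \v Cech cocycles is a coboundary. So what remains for well-definedness is to see that an isomorphism $\Phi : E_1 \to E_2$ of principal $G$-bundles (with $p_2\circ\Phi = p_1$) induces the same \v Cech class. First I would fix a system of $G$-equivariant local trivializations $\{\rho_j : U_j\times F \to p_1^{-1}(U_j)\}$ for $E_1$; then $\{\Phi\circ\rho_j : U_j\times F \to p_2^{-1}(U_j)\}$ is a system of $G$-equivariant local trivializations for $E_2$ (here one uses that $\Phi$ is a $G$-equivariant homeomorphism over $B$). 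Computing the transition functions for this pushed-forward system via equation (\ref{eq:TransFunctValues}),
\[
(\Phi\circ\rho_k)^{-1}\circ(\Phi\circ\rho_j) = \rho_k^{-1}\circ\Phi^{-1}\circ\Phi\circ\rho_j = \rho_k^{-1}\circ\rho_j,
\]
so the transition cocycle for $E_2$ relative to this system equals $\{\rho_{jk}\}$ exactly — hence $p_{E_1} = p_{E_2}$ in $\check H^1$.

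For injectivity, suppose $p_1 : E_1 \to B$ and $p_2 : E_2 \to B$ are principal $G$-bundles with $p_{E_1} = p_{E_2} \in \check H^1(B;\mathscr C_G)$, and I want to produce an isomorphism $E_1 \cong E_2$. Choose trivializing covers for each; after passing to a common refinement $\mathcal U = \{U_j\}$ (restricting trivializations along inclusions, which is harmless) I may assume both bundles are trivialized over the same cover, with transition cocycles $\rho = \{\rho_{jk}\}$ and $\nu = \{\nu_{jk}\}$. The hypothesis $p_{E_1} = p_{E_2}$ says — possibly after one further refinement, which again I absorb into $\mathcal U$ — that $\rho - \nu = \delta\lambda$ for some $\lambda = \{\lambda_j\} \in \check C^0(\mathcal U;\mathscr C_G)$, i.e. $\rho_{jk} - \nu_{jk} = \lambda_k - \lambda_j$ (or $\lambda_j - \lambda_k$, depending on the \v Cech sign convention; I would pin this down from (\ref{eq:CocycleCondition})) on $U_j\cap U_k$. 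The idea is then to build $\Phi$ locally by "twisting the first trivialization by $\lambda_j$ and reading off through the second." Concretely, on $p_1^{-1}(U_j)$ define $\Phi_j := \nu_j \circ (\mathrm{id}_{U_j}\times (\,\cdot\,\lambda_j))\circ \rho_j^{-1}$, where $(\,\cdot\,\lambda_j)$ denotes the fiberwise right translation $(b,e)\mapsto (b, e\cdot\lambda_j(b))$. Each $\Phi_j : p_1^{-1}(U_j) \to p_2^{-1}(U_j)$ is a $G$-equivariant homeomorphism over $U_j$ — equivariance because right translations commute with the right $G$-action in an abelian group, and the trivializations are equivariant. The key computation is that on overlaps $\Phi_j = \Phi_k$: expanding both sides using (\ref{eq:TransFunctValues}) for $\rho$ and for $\nu$, the overlap identity reduces to the relation $e\cdot\rho_{jk}(b)\cdot\lambda_j(b) = e\cdot\lambda_k(b)\cdot\nu_{jk}(b)$, which in additive notation is exactly the coboundary relation $\rho_{jk} + \lambda_j = \lambda_k + \nu_{jk}$ — true by choice of $\lambda$. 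Hence the $\Phi_j$ glue to a global $G$-equivariant homeomorphism $\Phi : E_1 \to E_2$ with $p_2\circ\Phi = p_1$, which is the desired isomorphism.

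The main obstacle — and the only genuinely delicate bookkeeping — is the passage to refinements in the injectivity argument: the equality $p_{E_1} = p_{E_2}$ holds in the colimit $\check H^1(B;\mathscr C_G) = \varinjlim_{\mathcal U} \check H^1(\mathcal U;\mathscr C_G)$, so a priori the two transition cocycles become cohomologous only after refining to some common cover, and one must also arrange that both bundles are trivialized over that cover. I would handle this once and for all at the start by replacing the two given trivializing covers and any witnessing refinement by a single common refinement $\mathcal U$, restricting all trivializations along the cover inclusions, and noting that restriction sends transition cocycles to transition cocycles and coboundaries to coboundaries, so nothing is lost. The rest — equivariance of $\Phi_j$, the overlap identity, continuity, and that $\Phi$ is a homeomorphism with $p_2\circ\Phi = p_1$ — is routine once the sign conventions in (\ref{eq:CocycleCondition}) are fixed and consistently propagated.
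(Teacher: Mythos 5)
Your proposal is correct, and it supplies exactly the argument the paper leaves implicit: the text before the lemma only verifies that two systems of trivializations of a \emph{single} bundle give cohomologous cocycles, and your two additions --- pushing a trivializing system forward through an isomorphism $\Phi$ to get well-definedness on isomorphism classes, and gluing the local maps $\nu_j\circ(\mathrm{id}\times(\cdot\,\lambda_j))\circ\rho_j^{-1}$ to get injectivity --- are the standard way to complete it. The one sign you flagged is genuinely harmless: with the paper's convention $\rho_k^{-1}\circ\rho_j(b,e)=(b,e\cdot\rho_{jk}(b))$ the overlap identity comes out as $\rho_{jk}-\nu_{jk}=\lambda_j-\lambda_k$ rather than $\lambda_k-\lambda_j$, but since $G$ is abelian one simply replaces $\lambda$ by $-\lambda$, so ``$\rho-\nu$ is a coboundary'' is all that is ever used.
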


This is in fact a bijection.
To check surjectivity, fix an open cover $\mathcal{U}= \{U_j\}_{j\in J}$ for $B$,
and a \v{C}ech cocyle
\[
 \eta = \{\eta_{jk}\}  \in \check{Z}^1(\mathcal{U}; \mathscr{C}_G )
\]
Then one can construct a $G$-principal bundle over $B$ with total space
\begin{equation}\label{eq:BundleFromCocycle}
E_\eta = \left(\bigcup_{j\in J} U_j \times \{j\} \times G \right) \Big/ (b,j,g) \sim \big(b, k , g  + \eta_{jk}(b)\big)
\;\;\mbox{ , } \;\;
b\in U_j \cap U_k
\end{equation}
and projection
\[
p_\eta : E_\eta \longrightarrow B
\]
taking  the class of $(b, j, g) \in U_j \times \{j\} \times G$ in the quotient $E_\eta$, to the point $b\in B$.
Notice that if $\eta_j : U_j \times G \longrightarrow E_\eta$ sends
 $(b,g)$ to the class of $(b,j,g)$ in  $E_\eta$, then $\{\eta_j\}$ defines a system of local trivializations for $(p_\eta, E_\eta)$,
and that $\eta = \{\eta_{jk}\}$ is the associated system of transition functions.
Therefore,

\begin{theorem}\label{thm:IsoCohoPrinG}
The function
\[
\begin{array}{ccc}
\check{H}^1(B;\mathscr{C}_G) & \longrightarrow & \mathsf{Prin}_G(B) \\
\left[\eta\right] &\mapsto & [E_\eta]
\end{array}
\]
is a natural bijection.
\end{theorem}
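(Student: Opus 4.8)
The plan is to show that the map $[\eta] \mapsto [E_\eta]$ and the map $[(p,E)] \mapsto p_E$ from Lemma \ref{lemma:GbundlesToCohomology} are mutually inverse; since the latter is already known to be well-defined and injective, establishing that the former is a two-sided inverse simultaneously proves it is well-defined, bijective, and the natural inverse. First I would verify that $[\eta] \mapsto [E_\eta]$ is well-defined: if $\eta$ and $\tilde\eta$ are cohomologous \v{C}ech cocycles on covers $\mathcal{U}$ and $\tilde{\mathcal{U}}$, one passes to a common refinement $\mathcal{W}$ on which $\eta - \tilde\eta = \delta\{\lambda_j\}$ for some $0$-cochain $\lambda = \{\lambda_j : W_j \to G\}$, and then the assignment sending the class of $(b,j,g)$ in $E_\eta$ to the class of $(b,j,g+\lambda_j(b))$ in $E_{\tilde\eta}$ is checked to be a well-defined, $G$-equivariant homeomorphism commuting with the projections; the cocycle-difference identity $\tilde\eta_{jk}(b) = \eta_{jk}(b) + \lambda_k(b) - \lambda_j(b)$ is exactly what makes this respect the two gluing relations. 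This is the step I expect to require the most care, since it involves chasing the equivalence relations in (\ref{eq:BundleFromCocycle}) through a refinement.

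Next I would check the two compositions. Going $\check{H}^1 \to \mathsf{Prin}_G(B) \to \check{H}^1$: by the remark preceding Theorem \ref{thm:IsoCohoPrinG}, the system $\{\eta_j : U_j \times G \to E_\eta\}$ is a system of local trivializations for $(p_\eta, E_\eta)$ whose associated transition functions, computed via (\ref{eq:TransFunctValues}), are precisely the $\eta_{jk}$; hence $(p_\eta)_{E_\eta} = [\eta]$ in $\check{H}^1(B;\mathscr{C}_G)$, so this composition is the identity. Going the other way, $\mathsf{Prin}_G(B) \to \check{H}^1 \to \mathsf{Prin}_G(B)$: given $(p,E)$ with transition cocycle $\rho = \{\rho_{jk}\}$ relative to trivializations $\{\rho_j\}$, I would exhibit an explicit $G$-equivariant homeomorphism $\Phi : E_\rho \to E$ over $B$ by sending the class of $(b,j,g)$ to $\rho_j(b,e_0)\cdot g$, where $e_0 \in F$ is a fixed basepoint in the fiber (or, more invariantly, $\rho_j(b, e_0 \cdot g)$); the defining equation (\ref{eq:TransFunctValues}) guarantees this is independent of the representative $(b,j,g)$, and fiberwise freeness plus transitivity make it a fiberwise bijection, hence a homeomorphism by the local triviality. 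Since the composition $\mathsf{Prin}_G(B) \to \check{H}^1$ is injective (Lemma \ref{lemma:GbundlesToCohomology}) and we have just shown it has a right inverse, it is also surjective, and both maps are bijections inverse to one another.

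Finally, for naturality, given a continuous map $f : B' \to B$ I would note that pulling back a bundle $p : E \to B$ along $f$ corresponds, on the level of transition cocycles, to pulling back the \v{C}ech cocycle: the cover $\{f^{-1}(U_j)\}$ of $B'$ with cocycle $\{\rho_{jk}\circ f\}$ represents $f^*[(p,E)]$, and one checks directly that $E_{\{\eta_{jk}\circ f\}}$ is canonically isomorphic to $f^*E_\eta$ via $(b', j, g) \mapsto (f(b'), [\,(f(b'),j,g)\,])$. Thus the square relating $f^* : \check{H}^1(B;\mathscr{C}_G) \to \check{H}^1(B';\mathscr{C}_{G})$ and $f^* : \mathsf{Prin}_G(B) \to \mathsf{Prin}_G(B')$ commutes, which is the asserted naturality. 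The only genuinely delicate point throughout is the refinement bookkeeping in the well-definedness step; everything else is a direct application of the identities (\ref{eq:TransFunctValues}) and (\ref{eq:BundleFromCocycle}) together with Lemma \ref{lemma:GbundlesToCohomology}.
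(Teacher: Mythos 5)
Your proposal is correct and follows essentially the same route as the paper: the paper also combines the injectivity from Lemma \ref{lemma:GbundlesToCohomology} with the observation that the local trivializations $\{\eta_j\}$ of $(p_\eta,E_\eta)$ have transition cocycle exactly $\{\eta_{jk}\}$, so that $\eta\mapsto E_\eta$ inverts the transition-function map. The only difference is one of thoroughness --- you spell out the well-definedness on cohomology classes, the explicit isomorphism $E_\rho\cong E$, and naturality, all of which the paper leaves implicit.
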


\noindent In addition to this characterization of principal $G$-bundles over $B$  as \v{C}ech cohomology classes,
there is another  interpretation in terms of classifying maps.
We will combine these two views in order to produce coordinates for data in the next sections.

Indeed, to each topological group $G$ one can associate a space $EG$
that is both weakly contractible, i.e. all its homotopy groups are trivial, and which comes equipped with a free right $G$-action
\[
EG \times G \longrightarrow EG
\]
The quotient  $BG := EG/G$ is a topological space (endowed with the quotient topology),
called the classifying space of $G$,
and the quotient map
\[
\jmath:
EG \longrightarrow BG = EG/G
\]
defines a principal $G$-bundle over $BG$, called the universal bundle.
It is important to note that there are several constructions of $EG$, and thus of $BG$,
but they all have the same homotopy type.
One model for $EG$ is   the Milnor construction \cite{milnor1956construction}
\begin{equation}\label{eq:MilnorConstr}
\mathcal{E}G := G*G*G* \cdots
\end{equation}
with $G$ acting diagonally by right multiplication on each term of the infinite join.

The next Theorem explains the universality of $\jmath : EG \longrightarrow BG$.
Given a continuous map $f: B \longrightarrow BG$, the pullback
$f^*EG$ is   the principal $G$-bundle over $B$
with total space $\{(b,e) \in B\times EG : f(b) = \jmath(e)\}$,
and  projection map  $(b,e)\mapsto b$.
Moreover,
\begin{theorem}\label{thm:IsoHomoPrinG}
Let $[B,BG]$ denote the set of homotopy class of maps from $B$ to the classifying space $BG$.
Then, the function
\[
\begin{array}{ccl}
  [B, BG] & \longrightarrow &  \mathsf{Prin}_G(B)\\
 \; [ f] & \mapsto & [f^*EG]
\end{array}
\]
is a  bijection.
\end{theorem}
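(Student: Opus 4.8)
The plan is to factor the correspondence through the set of $G$-equivariant maps $E \longrightarrow EG$ modulo $G$-equivariant homotopy, exploiting throughout that $EG$ is weakly contractible and that $B$ is paracompact. I would organize the argument as: (a) well-definedness, (b) a key lemma about equivariant maps into $EG$, (c) surjectivity, (d) injectivity, with (b) carrying essentially all the content.

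\textbf{Well-definedness.} First I would verify that $[f]\mapsto[f^*EG]$ is independent of the representative $f$, i.e. that homotopic maps pull $EG$ back to isomorphic principal $G$-bundles. If $H:B\times[0,1]\longrightarrow BG$ is a homotopy from $f_0$ to $f_1$, then the principal $G$-bundle $H^*EG$ over $B\times[0,1]$ restricts to $f_0^*EG$ over $B\times\{0\}$ and to $f_1^*EG$ over $B\times\{1\}$; the bundle covering homotopy theorem, valid since $B\times[0,1]$ is paracompact, produces an isomorphism between these restrictions by patching local trivializations with a subordinate partition of unity (see \cite{husemoller1966fibre}). Hence $[f_0^*EG]=[f_1^*EG]$.

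\textbf{Key lemma.} The heart of the proof is: for every principal $G$-bundle $p:E\longrightarrow B$ with $B$ paracompact there exists a $G$-equivariant map $\tilde f:E\longrightarrow \mathcal E G$, and any two such maps are $G$-equivariantly homotopic. For existence I would take a locally finite trivializing cover $\{U_j\}_{j\in J}$ with subordinate partition of unity $\{\phi_j\}$; the $G$-equivariant local trivializations give continuous maps $q_j:p^{-1}(U_j)\longrightarrow G$ with $q_j(e\cdot g)=q_j(e)+g$, and after the standard reduction of $\{U_j\}$ to a countable cover (possible over a paracompact base, and needed because $\mathcal E G=G*G*\cdots$ is a countable join) one sets
\[
\tilde f(e) \;=\; \bigl(\,(\phi_j\circ p)(e)\cdot q_j(e)\,\bigr)_{j}
\]
in the infinite join, with $\phi_j(p(e))$ the barycentric weight on the $j$-th copy of $G$; this is well defined, continuous, and $G$-equivariant. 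For uniqueness up to homotopy I would pass two given maps to a common trivializing cover and use the linear homotopy in the join with weights $(1-t)$ and $t$ on the two coordinate families; equivalently, $G$-maps $E\longrightarrow \mathcal E G$ are precisely sections of the associated fiber bundle $E\times_G \mathcal E G\longrightarrow B$, whose fiber $\mathcal E G$ is weakly contractible, and such sections exist and are unique up to homotopy over a paracompact base. I expect this lemma --- in particular the passage to a countable cover and the homotopy between maps defined via different covers --- to be the main obstacle; the remaining steps are formal.

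\textbf{Surjectivity and injectivity.} Given $[(p,E)]\in\mathsf{Prin}_G(B)$, choose a $G$-equivariant $\tilde f:E\longrightarrow EG$ from the lemma and let $f:B\longrightarrow BG$ be the induced map on $G$-orbit spaces, which is continuous by the universal property of the quotient since $\jmath\circ\tilde f=f\circ p$; then $e\mapsto(p(e),\tilde f(e))$ is a $G$-equivariant morphism $E\longrightarrow f^*EG$ covering $\mathrm{id}_B$, hence fiberwise a map of $G$-torsors, hence fiberwise bijective, and a fiberwise-bijective morphism of principal $G$-bundles over $B$ covering the identity is an isomorphism (in local trivializations it has the form $(b,g)\mapsto(b,\gamma(b)+g)$). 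Thus $[f^*EG]=[(p,E)]$ and the map is surjective. For injectivity, suppose $\Phi:f_0^*EG\longrightarrow f_1^*EG$ is an isomorphism; writing $E=f_0^*EG$ and letting $\pi_i:f_i^*EG\longrightarrow EG$ be the canonical $G$-equivariant projections (which induce $f_0$, $f_1$ on orbit spaces), the two $G$-maps $\pi_0$ and $\pi_1\circ\Phi:E\longrightarrow EG$ induce $f_0$ and $f_1$ respectively, since $\Phi$ covers $\mathrm{id}_B$. By the key lemma they are $G$-equivariantly homotopic, and passing to orbit spaces yields a homotopy $B\times[0,1]\longrightarrow BG$ from $f_0$ to $f_1$, so $[f_0]=[f_1]$. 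This completes the proof.
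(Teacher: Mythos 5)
Your proposal is correct and is essentially the argument of the reference the paper itself defers to (Husemoller, Ch.~4, Thms.~12.2 and 12.4): classify via $G$-equivariant maps into the Milnor join $\mathcal{E}G$, with existence from a partition of unity subordinate to a (countable refinement of a) trivializing cover and uniqueness up to $G$-homotopy from the contractibility of the join. The only detail worth making explicit in your key lemma is the even/odd reindexing that puts the two given maps into disjoint join coordinates before applying the linear homotopy, and that $\mathcal{E}G$ is genuinely (not merely weakly) contractible, which is what licenses the section-of-$E\times_G\mathcal{E}G$ phrasing over an arbitrary paracompact base.
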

\begin{proof}
See \cite{husemoller1966fibre}, Chapter 4: Theorems 12.2 and 12.4.
\end{proof}
Theorem \ref{thm:IsoHomoPrinG} implies that given a principal $G$-bundle $p : E \longrightarrow B$,
there exists a continuous map $f: B \longrightarrow BG $ so
that $f^*EG $ is isomorphic to $(p,E)$, and that the choice of $f$ is unique up to homotopy.
Any such choice is called a classifying map for   $p: E \longrightarrow B$.

\section{From Integer Simplicial Cohomology to Circular Coordinates}\label{sec:Formulas}
For an arbitrary topological group $G$, the Milnor construction (\ref{eq:MilnorConstr})
produces an explicit universal $G$-bundle $\jmath: \mathcal{E}G\longrightarrow \mathcal{B}G$,
but the spaces $\mathcal{E}G$ and $\mathcal{B}G$ tend to be rather large.
Indeed, they are often infinite-dimensional CW-complexes.
For the case $G = \Z$ we have the more economical models $\mathcal{E}\Z\simeq \R$ and $\mathcal{B}\Z \simeq S^1 \subset \C$,
with $\Z$ acting on $\R$ by right translation: $\R \times \Z \ni (r,m)\mapsto r+m$,
and projection
\[
\begin{array}{rccl}
  p:  & \R & \longrightarrow &  S^1\\
    & r & \mapsto & \exp(2\pi i r)
\end{array}
\]

Since $\Z$ is discrete, then $\Z$-valued continuous functions on $B$ are in fact locally constant,
and hence $\mathscr{C}_\Z $ is exactly the sheaf of locally
constant functions with values in $\Z$, denoted $\underline{\Z}$.
Combining the definition of the \v{C}ech cohomology group $\check{H}^1(B;\underline{\Z})$   with
Theorems \ref{thm:IsoCohoPrinG} and \ref{thm:IsoHomoPrinG}, yields a bijection
\begin{equation}\label{eq:IsoCohoCirc}
\varprojlim_{\mathcal{U}} H^1(\mathcal{N}(\mathcal{U}); \Z) \cong \left[B, S^1\right]
\end{equation}
where the limit is taken over all locally finite covers $\mathcal{U}$ of $B$, ordered by refinement,
and the groups are the 1-dimensional simplicial cohomology with $\Z$ coefficients
 of the associated  nerve complexes
$\mathcal{N}(\mathcal{U})$.
The goal now  is to produce an explicit family of compatible functions
$H^1(\mathcal{N}(\mathcal{U});\Z) \longrightarrow [B,S^1]$
realizing the isomorphism from (\ref{eq:IsoCohoCirc}).
This is done in Theorem \ref{thm:ClassifyingForumlaHarmonic}, and an explicit
formula is given by   (\ref{eq:ClassifyingForumlaHarmonic}).

To begin, let $\{\varphi_j\}_{j\in J}$ be a partition of unity on $B$
dominated\footnote{ That is, so that $\mathsf{support}(\varphi_j) \subset \mathsf{closure}(U_j)$ for all $j\in J$.}
 by $\mathcal{U} = \{U_j\}_{j\in J}$,
fix a 1-cocycle $\eta = \{\eta_{jk}\} \in Z^1(\mathcal{N}(\mathcal{U});\Z)$,
and define for each $j\in J$ the map
\begin{equation}\label{eq:LiftCircCoords}
\begin{array}{rccl}
f_j :& U_j \times \{j\} \times \Z & \longrightarrow &\R  \\
& (b,j,n) & \mapsto & n + \sum\limits_{\ell} \varphi_\ell(b)\eta_{j\ell}
\end{array}
\end{equation}
Since   $\mathcal{U}$ is locally finite, then
all but finitely many terms in this sum  are zero.
Note that
$\Z$ acts on  $U_j\times \{j\}\times \Z$ by right translation $\big((b,j,n) , m\big) \mapsto (b,j, n + m)$,
and that $f_j$ is equivariant with respect to this action: $f_j(b,j,n+ m)  = f_j(b, j, n) + m$.
%

\noindent If $b\in U_j \cap U_k$, then we have that
\begin{eqnarray*}
  f_k(b, k, n + \eta_{jk}) &=& n + \sum_{\ell\in J} \varphi_\ell(b)(\eta_{k \ell} + \eta_{jk}) \\
   &=& n + \sum_{\ell\in J} \varphi_\ell(b)\eta_{j\ell} \\
   &=& f_j(b,j,n)
\end{eqnarray*}
and hence the $f_j$'s can be assembled  to induce  a continuous map $\gor{f}_\eta : E_\eta \longrightarrow \R$ on the quotient space
defined by (\ref{eq:BundleFromCocycle}); here $\eta = \{\eta_{jk}\}\in  Z^1(\mathcal{N}(\mathcal{U});\Z)$ is regarded as a collection of constant functions
$\eta_{jk} : U_j \cap U_k \longrightarrow \Z$.
To be more explicit, $\gor{f}_\eta$ sends the class of $(b,j,n)$ in $E_\eta$ to $f_j(b,j,n)\in \R$.
Since each $f_j$ is $\Z$-equivariant, then so is $\gor{f}_\eta$, and hence it descends to a
well defined map $f_\eta $ at the level of  base spaces
\begin{equation}\label{eq:ClassifyingForumlaInteger}
\begin{array}{cccc}
  f_\eta :&  B & \longrightarrow & S^1 \subset \C \\
   & U_j \ni b & \mapsto & \exp\left( 2\pi i   \sum\limits_k\varphi_k(b) \eta_{jk} \right)
\end{array}
\end{equation}

\begin{lemma}\label{prop:classifying}
The map $f_\eta$ classifies the principal  $\Z$-bundle $p_\eta : E_\eta \longrightarrow B$.
\end{lemma}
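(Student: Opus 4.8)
The plan is to exhibit an explicit isomorphism of principal $\Z$-bundles over $B$ between $p_\eta : E_\eta \longrightarrow B$ and the pullback $f_\eta^{\ast}\R$ of the universal $\Z$-bundle $p:\R\longrightarrow S^1$, $r\mapsto\exp(2\pi i r)$, along $f_\eta$; by the definition of classifying map that follows Theorem \ref{thm:IsoHomoPrinG}, producing such an isomorphism is exactly what the statement asserts. Recall that $f_\eta^{\ast}\R$ has total space $\{(b,r)\in B\times\R : f_\eta(b) = \exp(2\pi i r)\}$, projection $(b,r)\mapsto b$, and $\Z$-action $(b,r)\cdot m = (b,r+m)$.

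The candidate map comes for free from the $\Z$-equivariant map $\gor{f}_\eta : E_\eta\longrightarrow\R$ constructed just before (\ref{eq:ClassifyingForumlaInteger}): define
\[
\Phi : E_\eta \longrightarrow f_\eta^{\ast}\R,\qquad \Phi(e) = \big(p_\eta(e),\ \gor{f}_\eta(e)\big).
\]
I would then verify, in turn: (i) $\Phi$ takes values in $f_\eta^{\ast}\R$ --- writing $e$ for the class of $(b,j,n)$, one has $p_\eta(e)=b$ and $\gor{f}_\eta(e) = f_j(b,j,n) = n+\sum_\ell\varphi_\ell(b)\eta_{j\ell}$, so $\exp\!\big(2\pi i\,\gor{f}_\eta(e)\big) = \exp\!\big(2\pi i\sum_\ell\varphi_\ell(b)\eta_{j\ell}\big) = f_\eta(b)$ since $n\in\Z$, which is exactly formula (\ref{eq:ClassifyingForumlaInteger}); (ii) $\Phi$ is continuous, because $p_\eta$ and $\gor{f}_\eta$ are, and $f_\eta^{\ast}\R$ carries the subspace topology of $B\times\R$; (iii) $\Phi$ commutes with the projections and is $\Z$-equivariant, since $p_\eta(e\cdot m)=p_\eta(e)$ and $\gor{f}_\eta(e\cdot m)=\gor{f}_\eta(e)+m$. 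Thus $\Phi$ is a morphism of principal $\Z$-bundles over $B$.

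The one point that needs care is that $\Phi$ is a \emph{homeomorphism}, since $E_\eta$ carries the quotient topology. Either one invokes the standard fact that a fiber-preserving, $\Z$-equivariant continuous map between principal $\Z$-bundles over the same base is automatically an isomorphism, or --- to keep things self-contained --- one writes down the inverse $\Psi$ directly: given $(b,r)\in f_\eta^{\ast}\R$, choose any $j\in J$ with $b\in U_j$, set $n := r - \sum_\ell\varphi_\ell(b)\eta_{j\ell}$ (an integer, because $\exp(2\pi i r)=f_\eta(b)=\exp(2\pi i\sum_\ell\varphi_\ell(b)\eta_{j\ell})$), and let $\Psi(b,r)$ be the class of $(b,j,n)$ in $E_\eta$. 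The crucial check is independence of the choice of $j$: if $b\in U_j\cap U_k$ with corresponding integers $n_j,n_k$, then $n_k-n_j = \sum_\ell\varphi_\ell(b)(\eta_{j\ell}-\eta_{k\ell}) = \eta_{jk}\sum_\ell\varphi_\ell(b) = \eta_{jk}$ by the cocycle condition (\ref{eq:CocycleCondition}) and $\sum_\ell\varphi_\ell\equiv1$, so $(b,j,n_j)\sim(b,k,n_k)$ in (\ref{eq:BundleFromCocycle}) --- the very same computation that glued the $f_j$'s together. Continuity of $\Psi$ is local: over the open set $\{(b,r) : b\in U_j\}$ it factors as a continuous $B\times\R$-valued map followed by the quotient projection, once one notes that $n$, being a continuous $\Z$-valued function of $(b,r)$, is locally constant. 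A routine check gives $\Phi\circ\Psi=\mathrm{id}$ and $\Psi\circ\Phi=\mathrm{id}$, so $\Phi$ is an isomorphism, $f_\eta^{\ast}\R\cong(p_\eta,E_\eta)$, and $f_\eta$ classifies $p_\eta : E_\eta\longrightarrow B$.

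The main obstacle here is not algebraic --- the cocycle identity has already been used in deriving $f_\eta$ --- but rather the bookkeeping around the quotient topology on $E_\eta$ needed to see that the natural bundle morphism $\Phi$ is invertible with continuous inverse; and even that reduces to arguments already carried out above (the gluing of the $f_j$'s) together with the elementary observation that a continuous integer-valued function is locally constant.
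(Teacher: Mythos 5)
Your proposal is correct and follows essentially the same route as the paper: the paper observes that $f_\eta\circ p_\eta=\exp\big(2\pi i\,\gor{f}_\eta\big)$, so that $\big(\gor{f}_\eta,f_\eta\big)$ is a morphism of principal $\Z$-bundles into $\exp(2\pi i\,\cdot):\R\to S^1$, and then cites Husemoller (Chapter 4, Theorem 4.2) to conclude $E_\eta\cong f_\eta^{\ast}\R$, which is exactly the isomorphism you package as $\Phi$. The only difference is that you optionally verify the invertibility of $\Phi$ by hand instead of citing the standard morphism-of-principal-bundles theorem; your explicit inverse and its well-definedness check (via the cocycle condition and $\sum_\ell\varphi_\ell\equiv 1$) are sound.
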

\begin{proof}
Let us see explicitly that the map $f_\eta$ is well defined; in other words, that the value $f_\eta(b)\in S^1$
is independent of the open set containing $b$.
Indeed, let $j,\ell \in J$ be so that $b\in U_j \cap U_\ell$.
We contend that $\varphi_k(b)\eta_{jk} = \varphi_k(b)(\eta_{j\ell} + \eta_{\ell k}  )$ for every $k\in J$.
If $b \notin U_k$, then the equality is trivial  since $\varphi_k(b) = 0$;
if $b \in U_k$, then $U_j \cap U_k \cap U_\ell \neq \emptyset$ and $\eta_{j k} = \eta_{j \ell } + \eta_{\ell k}$
since $\eta$ is a cocycle.
Therefore
\[
\sum\limits_k\varphi_k(b) \eta_{jk} = \eta_{j \ell} +  \sum\limits_k\varphi_k(b) \eta_{\ell k}
\]
and given that  $\eta_{j\ell} \in \Z$, then
$
\exp\left(2\pi i\sum\limits_k\varphi_k(b) \eta_{jk}\right) =
\exp\left(2\pi i\sum\limits_k\varphi_k(b) \eta_{\ell k}\right) $.

Finally, let us  check that taking  the pullback $f_\eta^*  \R $
of the universal $\Z$-bundle $\exp(2\pi i \; \cdot) :  \R \longrightarrow S^1$
yields a principal $\Z$-bundle  isomorphic to $p_\eta : E_\eta \longrightarrow B$.
Indeed, since $f_\eta \circ p_\eta  = \exp\left(2\pi i  \gor{f}_\eta\right) $,
then $\left(\gor{f}_\eta, f_\eta \right) : (p_\eta, E_\eta, B) \longrightarrow \left(\exp(2\pi i \; \cdot), \R , S^1\right)$
is a morphism of principal $\Z$-bundles, and the result follows from
\cite[Chapter 4: Theorem 4.2]{husemoller1966fibre}.
\end{proof}

\begin{theorem}\label{thm:ClassifyingForumlaHarmonic}
Let $\iota : \Z \hookrightarrow \R$ be the inclusion and
\begin{equation}\label{eq:realification}
\iota^* : H^1(\mathcal{N}(\mathcal{U}); \Z ) \longrightarrow H^1(\mathcal{N}(\mathcal{U}); \R )
\end{equation}
the induced homomorphism.
Given $\eta \in Z^1(\mathcal{N}(\mathcal{U}); \Z)$ and
$\tau\in C^0(\mathcal{N}(\mathcal{U}); \R)$, let
$\theta = \iota^\#(\eta) + \delta^0\tau $.
Denote by $\tau_j \in \R$  the value of $\tau$ on  the vertex  $j \in \mathcal{N}(\mathcal{U})$,
and by $\theta_{jk}\in \R$  the value of $\theta$ on the oriented edge $[j , k] \in  \mathcal{N}(\mathcal{U})$;
in particular $\theta_{jk} = - \theta_{kj}$, and $\theta_{jk} = 0$ whenever $\{j,k\} \notin \mathcal{N}(\mathcal{U})$.
If
\begin{equation}\label{eq:ClassifyingForumlaHarmonic}
\begin{array}{cccc}
  h_{\theta,\tau}: & B & \longrightarrow & S^1 \subset \C \\
   & U_j \ni b & \mapsto &  \exp\left\{ 2\pi i\left ( \tau_j +    \sum\limits_k \varphi_k(b) \theta_{jk}  \right) \right\}
\end{array}
\end{equation}
then $h_{\theta,\tau}$ is  a classifying map
for the principal $\Z$-bundle $p_\eta : E_\eta \longrightarrow B$.
\end{theorem}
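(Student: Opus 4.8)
The plan is to show that $h_{\theta,\tau}$ is homotopic to the map $f_\eta$ of equation~(\ref{eq:ClassifyingForumlaInteger}), which already classifies $p_\eta : E_\eta \longrightarrow B$ by Lemma~\ref{prop:classifying}. Since classifying maps are unique up to homotopy (Theorem~\ref{thm:IsoHomoPrinG}), establishing such a homotopy suffices. The first step is to rewrite $h_{\theta,\tau}$ more explicitly: substituting $\theta = \iota^\#(\eta) + \delta^0\tau$ into the formula means $\theta_{jk} = \eta_{jk} + (\tau_k - \tau_j)$, so that
\[
\tau_j + \sum_k \varphi_k(b)\theta_{jk}
= \tau_j + \sum_k \varphi_k(b)\eta_{jk} + \sum_k \varphi_k(b)(\tau_k - \tau_j)
= \sum_k \varphi_k(b)\eta_{jk} + \sum_k \varphi_k(b)\tau_k,
\]
where I used $\sum_k \varphi_k(b) = 1$ to cancel the $\tau_j$ terms. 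Hence on $U_j$ we have $h_{\theta,\tau}(b) = f_\eta(b)\cdot\exp\!\big(2\pi i \sum_k \varphi_k(b)\tau_k\big)$, and crucially the correction factor $g(b) := \exp\!\big(2\pi i \sum_k \varphi_k(b)\tau_k\big)$ is a \emph{globally defined} continuous map $g : B \longrightarrow S^1$ independent of the index $j$ (so this computation also reproves that $h_{\theta,\tau}$ is well defined).

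The second step is to produce the homotopy. Since $g : B \longrightarrow S^1$ and $B$ is connected and paracompact, I would lift the exponent: set $G(b) = \sum_k \varphi_k(b)\tau_k \in \R$, a continuous real-valued function with $g(b) = \exp(2\pi i\, G(b))$. Then define
\[
H : B \times [0,1] \longrightarrow S^1, \qquad H(b,s) = f_\eta(b)\cdot \exp\!\big(2\pi i\, s\, G(b)\big),
\]
which is continuous, satisfies $H(\,\cdot\,,0) = f_\eta$ and $H(\,\cdot\,,1) = h_{\theta,\tau}$, and is a homotopy in the required sense. By Theorem~\ref{thm:IsoHomoPrinG} the pullback bundles $f_\eta^*\R$ and $h_{\theta,\tau}^*\R$ are therefore isomorphic as principal $\Z$-bundles, and since $f_\eta$ classifies $p_\eta : E_\eta \longrightarrow B$ by Lemma~\ref{prop:classifying}, so does $h_{\theta,\tau}$.

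I do not anticipate a serious obstacle; the only subtlety is bookkeeping the two distinct appearances of ``$\varphi_k(b)$'' sums — one indexing cocycle values $\eta_{jk}$ and one indexing vertex values $\tau_k$ — and making sure the cancellation of $\tau_j$ uses $\sum_k\varphi_k\equiv 1$ correctly, as well as noting that local finiteness of $\mathcal U$ keeps every sum finite. One could alternatively argue directly at the level of bundle morphisms, mimicking the last paragraph of the proof of Lemma~\ref{prop:classifying}: the map $\widetilde{h} : E_\eta \longrightarrow \R$ sending the class of $(b,j,n)$ to $n + \tau_j + \sum_k\varphi_k(b)\theta_{jk}$ is well defined and $\Z$-equivariant by the same computation, covers $h_{\theta,\tau}$, and hence $(\widetilde{h}, h_{\theta,\tau})$ is a morphism of principal $\Z$-bundles, so the result again follows from \cite[Chapter 4: Theorem 4.2]{husemoller1966fibre}. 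I would likely present the homotopy argument as the main line since it most transparently isolates the role of $\tau$ as a ``gauge'' adjustment.
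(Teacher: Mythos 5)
Your proof is correct and takes essentially the same route as the paper: both arguments show that $h_{\theta,\tau}$ and $f_\eta$ differ by the factor $\exp\left(2\pi i\sum_k\varphi_k(b)\tau_k\right)$, which factors through the contractible space $\R$ and is therefore null-homotopic, so the two maps are homotopic and the claim follows from Lemma~\ref{prop:classifying} and Theorem~\ref{thm:IsoHomoPrinG}. Your explicit homotopy $H(b,s)=f_\eta(b)\exp(2\pi i\, s\, G(b))$ merely makes concrete the paper's observation that the correction factor $\nu_\tau$ factors through $\R$.
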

\begin{proof} Since $f_\eta$ is a classifying map for $E_\eta$, by Lemma \ref{prop:classifying},
then  it is enough to check that $f_\eta$ and $h_{\theta,\tau}$ are homotopic (see Theorem \ref{thm:IsoHomoPrinG}).
For $b\in U_j$ we have that
\begin{eqnarray*}
  f_\eta(b) &=&   \exp\left( 2\pi i   \sum\limits_k \varphi_k(b) \eta_{jk} \right) \\
   &=& \exp\left( 2\pi i   \sum\limits_k \varphi_k(b) (\theta_{jk} + \tau_j - \tau_k) \right) \\
   &=&\exp\left( 2\pi i\left ( \tau_j +    \sum\limits_k \varphi_k(b) (\theta_{jk} -  \tau_k) \right) \right) \\
   &=& \nu_\tau(b)\cdot h_{\theta,\tau}(b)
\end{eqnarray*}
where $\nu_\tau(b) = \exp\left(-2\pi i \sum\limits_k\varphi_k(b)\tau_k\right)$.
Since $\nu_\tau$ factors through $\R$:
\[
\begin{array}{cccccc}
  \nu_\tau: & B & \longrightarrow & \R & \longrightarrow & S^1\subset \C \\
   & b & \mapsto & \sum\limits_k \varphi_k(b)\tau_k & \mapsto & \exp\left(-2\pi i \sum\limits_k\varphi_k(b)\tau_k\right)
\end{array}
\]
then $\nu_\tau$ is null-homotopic, hence  $f_\eta $ is homotopic to $ h_{\theta,\tau}$, and the result follows.
\end{proof}

\begin{remark}
  We note that the relation $ \theta =  \iota^\#(\eta)+ \delta^0\tau$ from Theorem \ref{thm:ClassifyingForumlaHarmonic}
   implies that the
  cochain $\tau \in C^0(\mathcal{N}(\mathcal{U}); \R)$ encodes the degrees of freedom  in choosing a
   cocycle representative for the class $\iota^*([\eta]) \in H^1(\mathcal{N}(\mathcal{U});\R)$,
   and thus defining the classifying map $h_{\theta, \tau} : B \longrightarrow S^1$.
  This choice will be addressed in the discussion about Harmonic Smoothing in Section \ref{sec:HarmonicSmoothing}.
\end{remark}

\section{Persistent Cohomology and Sparse Circular Coordinates for Data}\label{sec:RealData}
In this section we show how the theory we have developed thus far can be applied to real data sets.
In particular, we explain and justify the choices  made in the construction outlined in the Introduction (\ref{sec:SCCalgorithm}).
Let us begin by fixing an ambient metric space $(\M,\dd)$, let $L \subset \M$  be finite,
and let
\begin{eqnarray*}
B_\alpha(\ell) &=& \{b\in \M : \dd(b,\ell) < \alpha\} \;\;\;\;\; , \;\;\;\;\; \alpha \geq 0, \; \ell \in L \\[.1cm]
\mathcal{B}_\alpha &=& \{B_\alpha(\ell)\}_{\ell \in L} \\[.1cm]
L^{(\alpha)} &=& \bigcup \mathcal{B}_\alpha
\end{eqnarray*}
The formulas derived in the previous section, specially (\ref{eq:ClassifyingForumlaInteger}),
imply that   each cocycle  $\eta \in Z^1(\mathcal{N}(\mathcal{B}_\alpha); \Z)$
yields a map  $h: L^{(\alpha)} \longrightarrow S^1$.
The thing to notice is that $h$ is defined on every $b \in L^{(\alpha)}$;
thus, given a large but finite
set $X\subset \M$ --- the data --- sampled around a continuous space $\X \subset \M$, one can select a much smaller set of landmarks $L \subset X$ and $\alpha > 0 $
for which $X \subset L^{(\alpha)}$.
The resulting circular coordinates
$h : L^{(\alpha)} \longrightarrow S^1$
will thus be defined on all points of $X$, though only
the landmark set is used in its construction.
As we alluded to in the introduction, this is what we mean  when we say that the coordinates are \emph{sparse}.

\subsection*{Landmark Selection}
In practice we select the landmarks $L\subset X$ either at random, or through \verb"maxmin" sampling:
Given $N\leq |X|$ and $\ell_1 \in X$ chosen arbitrarily,
assume that $\ell_1, \ldots, \ell_j\in X$ have been selected,  $1 \leq j < N$,
and let
\begin{equation}\label{eq:maxMinSampling}
\ell_{j+1} = \argmax_{x\in X} \min \big\{ \dd(x, \ell_1) ,\ldots, \dd(x, \ell_j) \big\}
\end{equation}
Following this inductive procedure defines a landmark set $L = \{\ell_1,\ldots, \ell_N\} \subset X$
that is in practice well-separated and well-distributed throughout the data.
However, it is important to keep in mind that this process is
 prone to choosing outliers.

\subsection*{The Subordinated Partition of Unity}
As for the choice of partition of unity $\{\varphi_\ell\}_{\ell \in L}$ dominated by $\mathcal{B}_\alpha$,
we can use that the cover is via metric balls,  and let
\begin{equation}\label{eq:PartitionOfUnity}
\varphi_\ell(b) =
\frac{|\alpha - \dd(\ell, b)|_+}{ \sum\limits_{\ell' \in L} |\alpha - \dd(\ell',b)|_+}
\;\;\;\;\; \;\; \mbox{ where }\;\;\;\;\;\;\;
|r|_+ = \max\{r,0\}, \;\; r\in \R
\end{equation}
See \cite[3.3 and Fig 6.]{perea2018multiscale}
for other typical choices of partition of unity in the case of metric spaces, and coverings via metric balls.

\subsection*{The Need for Persistence}
Even if the landmark set $L$ correctly approximates the underlying topology of
$X$, the choice of scale $\alpha >0$ and cocycle $\eta \in Z^1(\mathcal{N}(\mathcal{B}_\alpha);\Z)$
might reflect sampling artifacts instead of robust geometric features of the underlying space $\X$.
This is why we need persistent cohomology.
Indeed, a class $[\eta] \in H^1(\mathcal{N}(\mathcal{B}_\alpha) ; \Z)$
which is not in the kernel of the homomorphism
\[
H^1(\mathcal{N}(\mathcal{B}_{\alpha}) ; \Z)  \longrightarrow H^1(\mathcal{N}(\mathcal{B}_{\alpha'}); \Z)
\;\;\;\;\; , \;\;\;\;\;
0< \alpha' < \alpha
\]
induced by the inclusion
$\mathcal{N}(\mathcal{B}_{\alpha'})  \subset \mathcal{N}(\mathcal{B}_\alpha) $,
 is less likely to correspond to spurious features as the difference $\alpha - \alpha'$ increases.
Note, however, that the efficient computation of persistent cohomology classes relies on
using field coefficients.
We proceed, following \cite{de2011persistent} and \cite{perea2018multiscale},
by choosing  a prime $q>2$ and a scale $\alpha > 0$ so that
(1)   $H^1(\mathcal{N}(\mathcal{B}_{\alpha}) ; \Z/q)$ contains a class with large persistence,
and (2) so that the homomorphism
$H^1(\mathcal{N}(\mathcal{B}_{\alpha}) ; \Z)  \longrightarrow H^1(\mathcal{N}(\mathcal{B}_{\alpha}); \Z/q) $,
induced by the quotient map $\Z \longrightarrow \Z/q$, is surjective.

\subsection*{Lifting Persistence  to Integer Coefficients}
As stated in \cite{de2011persistent}, one has  that:
\begin{proposition}\label{prop:IntegerLift}
Let $K$ be a finite simplicial complex, and
suppose that $q \in \N$ does not divide the order of the torsion
subgroup of
$H^2(K;\Z)$.
Then the homomorphism
\[
\iota^*_q :
H^1(K;\Z) \longrightarrow H^1(K;\Z/q)
\]
induced by the quotient map $\iota_q : \Z \longrightarrow \Z/q$,
is surjective.
\end{proposition}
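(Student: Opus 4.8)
The plan is to use the long exact sequence in cohomology induced by the short exact sequence of coefficient groups
\[
0 \longrightarrow \Z \xrightarrow{\;\cdot q\;} \Z \xrightarrow{\;\iota_q\;} \Z/q \longrightarrow 0.
\]
Applying the cohomology functor for the finite simplicial complex $K$ yields a long exact sequence whose relevant portion is
\[
H^1(K;\Z) \xrightarrow{\;\iota^*_q\;} H^1(K;\Z/q) \xrightarrow{\;\beta\;} H^2(K;\Z) \xrightarrow{\;\cdot q\;} H^2(K;\Z),
\]
where $\beta$ is the Bockstein connecting homomorphism. By exactness at $H^1(K;\Z/q)$, the map $\iota^*_q$ is surjective precisely when $\beta$ is the zero map, which in turn holds (by exactness at the left-hand $H^2(K;\Z)$) precisely when multiplication by $q$ on $H^2(K;\Z)$ is injective.

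So the key step is to show that $q$ not dividing the order of the torsion subgroup of $H^2(K;\Z)$ forces multiplication by $q$ on $H^2(K;\Z)$ to be injective. Here I would use that $K$ is a finite simplicial complex, so $H^2(K;\Z)$ is a finitely generated abelian group, hence splits as $\Z^r \oplus T$ with $T$ the (finite) torsion subgroup. Multiplication by $q$ is injective on the free part $\Z^r$ automatically, and on $T$ it is injective if and only if $q$ is coprime to $|T|$ --- indeed if $\gcd(q,|T|)=1$ then $q$ acts invertibly on $T$, whereas if a prime $p \mid \gcd(q,|T|)$ then $T$ has a $p$-torsion element killed by $q$. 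Since the hypothesis is exactly that $q$ does not divide $|T|$, every prime factor of $q$ is coprime to $|T|$, so $q$ is coprime to $|T|$ and multiplication by $q$ is injective on $H^2(K;\Z)$.

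Assembling these: injectivity of $\cdot q$ on $H^2(K;\Z)$ gives $\mathrm{im}(\beta) = \ker(\cdot q) = 0$, so $\beta = 0$, so $\mathrm{im}(\iota^*_q) = \ker(\beta) = H^1(K;\Z/q)$, i.e. $\iota^*_q$ is surjective. The only mild subtlety --- and the step I would be most careful about --- is the translation "coprime to the order of a finite abelian group $\Rightarrow$ multiplication is an automorphism of that group"; this is elementary (it follows from the structure theorem, or directly from the fact that the exponent divides the order), but it is the one place where the numerical hypothesis on $q$ actually gets used, so it deserves an explicit sentence rather than being swept under "clearly". Everything else is a formal consequence of the long exact coefficient sequence and finite generation of the cohomology of a finite complex.
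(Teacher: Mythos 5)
Your overall route is exactly the paper's: the paper's entire proof is a one-line appeal to the Bockstein long exact sequence associated to $0 \to \Z \xrightarrow{\times q} \Z \xrightarrow{\iota_q} \Z/q \to 0$, and you have simply written out the exactness argument it gestures at, correctly reducing surjectivity of $\iota_q^*$ to injectivity of multiplication by $q$ on $H^2(K;\Z)$, and then splitting off the torsion subgroup $T$ using finite generation. All of that is fine and is precisely what the paper intends.

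The gap is in your final number-theoretic step, and it is a real one. From ``$q$ does not divide $|T|$'' you conclude ``every prime factor of $q$ is coprime to $|T|$, so $\gcd(q,|T|)=1$.'' That implication is false for composite $q$: take $q=4$ and $|T|=2$; then $4\nmid 2$ but $\gcd(4,2)=2$, and multiplication by $4$ annihilates the $2$-torsion. This is not a pedantic point, because the proposition as literally stated (for arbitrary $q\in\N$) fails at exactly this spot: for $K=\RP^2$ one has $H^1(K;\Z)=0$ while $H^1(K;\Z/4)\cong \Hom(\Z/2,\Z/4)\cong\Z/2$, yet $4$ does not divide $|T|=|H^2(\RP^2;\Z)|=2$, so $\iota_4^*$ is not surjective. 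What is true, and what your argument actually proves, is the statement under the hypothesis $\gcd(q,|T|)=1$ --- in particular for $q$ a prime not dividing $|T|$, which is the only case the paper ever uses (step (2) of the algorithm always takes $q$ a prime). So: same approach as the paper, correctly executed up to the last sentence, but that sentence needs the coprimality hypothesis stated outright rather than derived from ``$q\nmid |T|$.''
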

\begin{proof}
This follows directly from  the Bockstein long exact sequence in cohomology,
corresponding to the short exact sequence
$
\begin{tikzcd}[column sep = scriptsize]
0 \ar{r}& \Z \ar[r, "\times q"] & \Z \ar[r, "\iota_q"] & \Z/q \ar{r} & 0.
\end{tikzcd}
$
\end{proof}

More generally, let $\{K_\alpha\}_{\alpha\geq 0 }$ be a filtered simplicial complex
with $\bigcup\limits_{\alpha \geq 0} K_\alpha$ finite.
Since each complex $K_\alpha$ is finite,
and the cohomology groups $H^2(K_\alpha; \Z)$ change  only  at finitely many values of $\alpha$,
then there exists $Q\in \N$ so that  the hypotheses of Proposition \ref{prop:IntegerLift}
will be satisfied for each  $q\geq Q$, and all $\alpha\geq 0$.
In practice we choose a prime $q$ at random, with the intuition that for scientific data
only a few primes are  torsion contributors.

Let $\Z/q = \{0,1, \ldots, q-1\}$ and for $\eta' \in Z^1(K_\alpha; \Z/q)$ let   $\eta\in C^1(K_\alpha;\Z)$
be defined on each 1-simplex $\sigma\in K_\alpha$ as:
\begin{equation}\label{eq:CoycleLift}
\eta(\sigma) =
\left\{
  \begin{array}{lcr}
    \eta'(\sigma) & \hbox{if } & \eta'(\sigma) \leq \frac{q-1}{2}  \\[.25cm]
    \eta'(\sigma) - q & \hbox{ if }& \eta'(\sigma) > \frac{q-1}{2}
  \end{array}
\right.
\end{equation}
Thus,  $\eta$ takes values in $ \left\{-\frac{q-1}{2} , \ldots,  0 ,  \ldots, \frac{q-1}{2}\right\} \subset \Z$
and it satisfies $(\eta \mod q) = \eta'$.
For the examples we have observed, the cochain defined by (\ref{eq:CoycleLift}) produces an integer cocycle.
One of the reviewers of an earlier version of this paper remarked that this is not always the case;
the outlined procedure tends to fail (in real world-examples)
when the cohomology computation involves division by 2.
As highlighted in \cite[2.4]{de2011persistent},    solving a Diophantine linear system can be used to fix the problem.

\subsection*{Use Rips, not Nerves}
Constructing the filtered complex $\{ \mathcal{N}(\mathcal{B}_\alpha)\}_{\alpha\geq 0}$
can be rather expensive  for a general ambient metric space $(\M,\dd)$.
Indeed, the inclusion of an $n$-simplex into the nerve complex is predicated on checking if the intersection of
$n+1 $ ambient metric balls is nonempty. This is nontrivial on curved spaces.
On the other hand, the Rips complex
\[
R_\alpha(L) = \{\sigma \subset L : \mathsf{diam}(\sigma) < \alpha\}  \;\;\;\; , \;\;\;\; \alpha \geq 0
\]
provides  a   straightforward alternative, since we can  use  that
\[R_\alpha(L) \subset \mathcal{N}(\mathcal{B}_\alpha) \subset R_{2\alpha}(L)
\]
for every $\alpha\geq0$.
Here is how. Let $q > 2$ be a prime so that
\[\iota_q^*:
H^1(R_\alpha(L) ; \Z) \longrightarrow H^1(R_\alpha(L);\Z/q)
\]
is surjective for all $\alpha \geq 0$,
 and let
\[
\jmath : H^1(R_{2\alpha}(L);\Z/q) \longrightarrow H^1(R_\alpha(L) ; \Z/q)
\]
be the homomorphism induced by the inclusion
$R_\alpha(L) \subset R_{2\alpha}(L)$.
Moreover, let
$\eta' \in Z^1(R_{2\alpha}(L);\Z/q) $
be so that $[\eta'] \notin \mathsf{Ker}(\jmath)$,
and fix  an integer lift
\[
\eta \in Z^1(R_{2\alpha}(L);\Z)
\]
That is, one
for which
$
\eta' - (\eta \mod q) \in Z^1(R_{2\alpha}(L) ; \Z)
$ is a coboundary, e.g. (\ref{eq:CoycleLift}).

The   diagram below summarizes the spaces and homomorphisms used thus far:
\[
\begin{tikzcd}
\hspace{-0.9cm}\left[\eta'\right]\in
H^1(R_{2\alpha}(L);\Z/q)
\arrow[d]
\arrow[dd, bend right=75, "\jmath"']
& H^1(R_{2\alpha}(L);\Z) \ni \left[\eta\right] \hspace{-0.8cm}
\arrow[l, "\iota^*_q"']
\arrow[d, "\iota^*_\Z"]
\\
H^1(\mathcal{N}(\mathcal{B}_{\alpha});\Z/q)
\arrow[d]
& H^1(\mathcal{N}(\mathcal{B}_\alpha);\Z) \ni \left[ \gor{\eta}\right] \hspace{-0.8cm}
\arrow[l, "\iota^*_q"]
\\
H^1(R_{\alpha}(L);\Z/q)
\end{tikzcd}
\]
Since the diagram commutes,
then  $\left[\eta\right]$ is not in the kernel of $\iota_\Z^*$,
and hence we obtain a nonzero element
$\iota_\Z^*([\eta]) = [\gor{\eta}] \in H^1(\mathcal{N}(\mathcal{B}_\alpha);\Z)$.
This is the class  we would use as input for  Theorem \ref{thm:ClassifyingForumlaHarmonic}.

\subsection*{Harmonic Smoothing}\label{sec:HarmonicSmoothing}
The final step is selecting an appropriate   cocycle representative (refer to Figure \ref{fig:example_circle} to see why this matters)
\[
\gor{\theta} \in Z^1 (\mathcal{N}(\mathcal{B}_\alpha ); \R)
\]
for the class $\iota^*([\gor{\eta}]) \in H^1(\mathcal{N}(\mathcal{B}_\alpha);\R)$,
see (\ref{eq:realification}).
Again, since one would hope to never compute the nerve complex, the strategy is to solve the problem
in $Z^1(R_{2\alpha}(L) ;\R)$ for $\iota^\#(\eta)$,
and then transfer the solution
using $\iota^\#_\R : C^1(R_{2\alpha}(L); \R)  \rightarrow C^1(\mathcal{N}(\mathcal{B}_\alpha); \R)$.

Inspecting (\ref{eq:ClassifyingForumlaHarmonic}) reveals that
the choice of $\gor{\theta}$ which promotes the smallest total variation in $h_{\gor{\theta}, \gor{\tau}}$,
is the one for which the value of $\gor{\theta}$  on each 1-simplex
of $\mathcal{N}(\mathcal{B}_\alpha)$
is as small as possible.
Consequently, we will look for the cocycle representative
\[
\theta \in Z^1(R_{2\alpha}(L);\R)
\]
of $\iota^*([\eta])$, which in average has the smallest squared value\footnote{ That is, we use the harmonic cocycle representative for appropriate  inner products on cochains} on each
1-simplex of $R_{2\alpha}(L)$.
That said, not all edges in the rips complex $R_{\epsilon}(L)$ are created equal.
Some might have just entered the filtration, i.e. $\dd(\ell_j, \ell_k) \approx  \epsilon$,
which would make them unstable if $L$ is corrupted with noise,
or perhaps $X \cap \left(B_{\epsilon/2}(\ell_j) \cup B_{\epsilon/2}(\ell_k)\right)$ is a rather small
portion of the data, which could happen if $\ell_j $ and $\ell_k$ are outliers selected during \verb"maxmin" sampling.

These observations can be encoded by  choosing weights on  vertices and edges:
\begin{equation}\label{eq:Weights}
\omega_\epsilon : L \times L \longrightarrow [0, \infty) \;\; , \;\; \epsilon \geq 0
\end{equation}
where $\omega_\epsilon$ is symmetric for all $\epsilon > 0$, it satisfies
\[
\omega_{\epsilon'} (\ell , \ell') \leq \omega_\epsilon(\ell, \ell') \;\;\;\;, \;\;\;\; \mbox{ for } \;\; \epsilon' \leq \epsilon
\]
and   $\omega_\epsilon(\ell, \ell') = 0$
only when    $ 0<\epsilon \leq \dd(\ell, \ell')$.
Here $\omega_\epsilon(\ell,\ell)$ is the weight of $\ell$, and $\omega_{\epsilon}(\ell,\ell')$ is the weight
of the edge $\{\ell,\ell'\}$.
For instance, one can take
\[
\omega_{\epsilon}(\ell,\ell') =   |\epsilon - \dd(\ell,\ell')|_+
\]
but we note that we have not yet systematically investigated the effects of this choice.
See \cite[Apendix D]{rybakken2017decoding} for a different heuristic.

It follows that $\omega_\epsilon$ defines inner products $\< \cdot , \cdot \>_\epsilon$ on $C^0(R_\epsilon(L) ; \mathbb{R})$
and $C^1(R_\epsilon(L) ; \mathbb{R})$,
by letting the indicator functions $1_\sigma$ on $k$-simplices ($k=0,1$) $\sigma \in R_\epsilon(L)$ be orthogonal, and setting
\begin{equation}\label{eq:InnerProduct}
\left \<
1_{ \sigma } , 1_{\sigma}
\right \>_\epsilon
=
\omega_\epsilon(\sigma)
\end{equation}
Using   $\<\cdot,\cdot\>_{\epsilon}$, for $\epsilon =2\alpha$,
we let $\beta\in B^1(R_{2\alpha}(L);\R)$ be the orthogonal projection of $\iota^\#(\eta)$
onto the space of 1-coboundaries,
and define
\begin{equation}\label{eq:FormulaHarmonicCocyle}
\theta = \iota^\#(\eta) - \beta
\end{equation}
A bit of linear algebra shows that,
\begin{proposition}
The 1-cocycle $\theta$ defined by (\ref{eq:FormulaHarmonicCocyle}) is a minimizer for the weighted least squares problem
\begin{equation}\label{eq:LeastSquares}
\min_{\phi \sim \iota^\#(\eta)} \;\;
\sum_{\sigma }
\omega_{2\alpha}(\sigma) \cdot \phi(\sigma)^2
\end{equation}
Here the sum runs over all 1-simplices $\sigma \in R_{2\alpha}(L)$,
and the minimization is over all 1-cocycles $\phi \in Z^1(R_{2\alpha}(L); \R)$ which are cohomologous to $\iota^\#(\eta)$.
\end{proposition}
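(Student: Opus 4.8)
The plan is to recognize the least-squares problem (\ref{eq:LeastSquares}) as a projection problem in the inner product space $C^1(R_{2\alpha}(L);\R)$ equipped with $\<\cdot,\cdot\>_{2\alpha}$, and then verify that the $\theta$ defined by (\ref{eq:FormulaHarmonicCocyle}) is exactly the point realizing that projection. First I would observe that the affine set over which we minimize, namely $\{\phi \in Z^1(R_{2\alpha}(L);\R) : \phi \sim \iota^\#(\eta)\}$, is precisely the coset $\iota^\#(\eta) + B^1(R_{2\alpha}(L);\R)$: any cocycle cohomologous to $\iota^\#(\eta)$ differs from it by a coboundary, and conversely $\iota^\#(\eta)$ plus any coboundary is again a cocycle in the same class (using that $\delta^1\delta^0 = 0$). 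Next I would note that the objective $\sum_\sigma \omega_{2\alpha}(\sigma)\,\phi(\sigma)^2$ is exactly $\|\phi\|_{2\alpha}^2$, the squared norm induced by the inner product (\ref{eq:InnerProduct}), since the indicator cochains $1_\sigma$ are declared orthogonal and $\<1_\sigma,1_\sigma\>_{2\alpha} = \omega_{2\alpha}(\sigma)$, so for $\phi = \sum_\sigma \phi(\sigma) 1_\sigma$ one gets $\|\phi\|_{2\alpha}^2 = \sum_\sigma \omega_{2\alpha}(\sigma)\phi(\sigma)^2$.

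With these two identifications, the problem becomes: minimize $\|\iota^\#(\eta) + b\|_{2\alpha}^2$ over $b \in B^1(R_{2\alpha}(L);\R)$, i.e. find the point of the affine subspace $\iota^\#(\eta) + B^1$ closest to the origin. Since $B^1(R_{2\alpha}(L);\R)$ is a finite-dimensional (hence closed) linear subspace of the inner product space $C^1(R_{2\alpha}(L);\R)$, the Hilbert projection theorem applies: the unique minimizer is the element of the coset orthogonal to $B^1$, which is obtained by subtracting from $\iota^\#(\eta)$ its orthogonal projection $\beta$ onto $B^1$. That is, the minimizer is $\iota^\#(\eta) - \beta = \theta$, and $\theta \perp B^1$. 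Finally I would confirm that $\theta$ is genuinely a \emph{cocycle}: it lies in the coset $\iota^\#(\eta) + B^1$ (since $-\beta \in B^1$), and $\iota^\#(\eta)$ is a cocycle because $\eta$ is a $\Z$-cocycle and $\iota^\#$ commutes with the coboundary, so $\theta \in Z^1(R_{2\alpha}(L);\R)$; this justifies the phrasing "the 1-cocycle $\theta$."

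There is no real obstacle here — the statement is an instance of "orthogonal projection solves least squares" — so the only thing requiring a word of care is the bookkeeping that the constraint set is an affine subspace rather than something more complicated, and that the weighted sum is literally the induced norm. I would present the argument in the order above: identify the feasible set as a coset of $B^1$; identify the objective as $\|\cdot\|_{2\alpha}^2$; invoke the projection theorem to pin down the minimizer as $\iota^\#(\eta)$ minus its projection onto $B^1$; and note uniqueness follows from strict convexity of the squared norm (equivalently, positive-definiteness of $\<\cdot,\cdot\>_{2\alpha}$, which holds because all the weights appearing on simplices \emph{present} in $R_{2\alpha}(L)$ are strictly positive — the vanishing case $\omega_\epsilon(\ell,\ell')=0$ only occurs when $\{\ell,\ell'\}$ is not an edge of $R_\epsilon(L)$). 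This last parenthetical about positive-definiteness of the inner product on the relevant simplices is the one place where the specific hypotheses on $\omega_\epsilon$ in (\ref{eq:Weights}) get used, and I would make sure to flag it.
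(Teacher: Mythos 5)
Your proof is correct and takes essentially the same route the paper intends: the paper omits the argument entirely (``a bit of linear algebra shows that''), and that linear algebra is exactly what you supply --- the feasible set is the coset $\iota^\#(\eta) + B^1(R_{2\alpha}(L);\R)$, the objective is the squared norm induced by $\<\cdot,\cdot\>_{2\alpha}$, and the minimizer over a coset of a subspace is obtained by subtracting the orthogonal projection onto that subspace, yielding $\theta = \iota^\#(\eta)-\beta$. Your closing observation that the inner product is positive-definite because $\omega_{2\alpha}$ is strictly positive on every simplex actually present in $R_{2\alpha}(L)$ is a detail the paper leaves implicit and is worth flagging, since it is what makes $\beta$ (and hence the minimizer) well-defined and unique.
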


Similarly, and if
\[
d_{2\alpha} : C^0(R_{2\alpha}(L); \R) \longrightarrow C^1(R_{2\alpha}(L);\R)
\]
denotes the coboundary map, then we let
\[
\tau \in \mathsf{Ker}(d_{2\alpha})^\perp
\subset
C^0(R_{2\alpha}(L);\R)
\]
in the orthogonal complement of the kernel of $d_{2\alpha}$,
be so that $d_{2\alpha}(\tau) = -\beta$.
Hence  $\tau$ is the 0-chain with the smallest norm mapping to $-\beta$ via $d_{2\alpha}$.
Consequently, if
\[
d_{2\alpha}^+ : C^1(R_{2\alpha}(L) ;\R) \longrightarrow C^0(R_{2\alpha}(L); \R)
\]
is the weighted Moore-Penrose pseudoinverse of $d_{2\alpha}$ (see \cite[III.3.4]{ben2003generalized}), then
\begin{equation}\label{eq:MP_harmonic}
\tau = - d_{2\alpha}^+(\iota^\#(\eta))
\hspace{1cm}\mbox{ and } \hspace{1cm}
\theta =
\iota^\#(\eta) \, +\, d_{2\alpha}\left(\tau\right)
\end{equation}
This is how we compute $\tau$ and $\theta$ in our implementation.
Now, let
\begin{eqnarray*}
\gor{\tau}  = \iota_\R^\#(\tau) &\in& C^0(\mathcal{N}(\mathcal{B}_\alpha); \R) \\[.1cm]
\gor{\theta} = \iota_\R^\#(\theta) &\in& Z^1(\mathcal{N}(\mathcal{B}_\alpha); \R)
\end{eqnarray*}

If we were to be completely rigourous, then $\gor{\tau}$ and $\gor{\theta}$ would be the
cochains going into (\ref{eq:ClassifyingForumlaHarmonic});
this would require the 1-skeleton of the nerve complex.
However, as the following proposition shows, this is unnecessary:
\begin{proposition}\label{prop:unnecesary}
For all $b\in B_{\alpha}(\ell_j)$, and every $j =1,\ldots, N$,  we have that
\[
 \exp\left\{ 2\pi i\left ( \gor{\tau}_j +    \sum\limits_{k=1}^N \varphi_k(b) \gor{\theta}_{jk}  \right) \right\}
=
 \exp\left\{ 2\pi i\left ( \tau_j +    \sum\limits_{k=1}^N \varphi_k(b) \theta_{jk}  \right) \right\}
\]
That is, we can compute sparse  circular coordinates using only the Rips filtration
on the landmark set.
\end{proposition}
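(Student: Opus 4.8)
The plan is to show that, for each fixed $j$ and each $b\in B_\alpha(\ell_j)$, the two real numbers inside the exponentials are in fact \emph{equal} --- not merely congruent modulo $\Z$ --- so that equality of the exponentials is immediate. Two ingredients make this work: an identification of vertex sets, and the support behaviour of the partition of unity (\ref{eq:PartitionOfUnity}).

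First I would record that the $0$-simplices of $\mathcal{N}(\mathcal{B}_\alpha)$ are the balls $\{B_\alpha(\ell)\}_{\ell\in L}$, which we identify with $L$ exactly as the vertices of $R_{2\alpha}(L)$ are identified with $L$. Under this identification, the containments $R_\alpha(L)\subset \mathcal{N}(\mathcal{B}_\alpha)\subset R_{2\alpha}(L)$ exhibit $\mathcal{N}(\mathcal{B}_\alpha)$ as a subcomplex of $R_{2\alpha}(L)$ on the same vertex set. Consequently $\gor{\tau}=\iota_\R^\#(\tau)$ restricts $\tau$ to the same set of vertices, so $\gor{\tau}_j=\tau_j$ for every $j$; and $\gor{\theta}=\iota_\R^\#(\theta)$ restricts $\theta$ to the $1$-simplices of $\mathcal{N}(\mathcal{B}_\alpha)$, so $\gor{\theta}_{jk}=\theta_{jk}$ whenever $\{j,k\}$ is an edge of $\mathcal{N}(\mathcal{B}_\alpha)$, while by the convention of Theorem \ref{thm:ClassifyingForumlaHarmonic} we have $\gor{\theta}_{jk}=0$ whenever $\{j,k\}\notin \mathcal{N}(\mathcal{B}_\alpha)$ and $\theta_{jk}=0$ whenever $\{j,k\}\notin R_{2\alpha}(L)$.

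The crux is then an observation about (\ref{eq:PartitionOfUnity}): $\varphi_k(b)\neq 0$ if and only if $\dd(\ell_k,b)<\alpha$, i.e. $b\in B_\alpha(\ell_k)$. Fix $j$ and $b\in B_\alpha(\ell_j)$, and compare the $k$-th summands $\varphi_k(b)\theta_{jk}$ and $\varphi_k(b)\gor{\theta}_{jk}$. If $\varphi_k(b)=0$, both vanish. If $\varphi_k(b)\neq 0$, then $b\in B_\alpha(\ell_j)\cap B_\alpha(\ell_k)$, so this intersection is nonempty, which means precisely that $\{j,k\}$ is a $1$-simplex of the nerve $\mathcal{N}(\mathcal{B}_\alpha)$; hence $\gor{\theta}_{jk}=\theta_{jk}$ and the summands agree. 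In every case $\varphi_k(b)\theta_{jk}=\varphi_k(b)\gor{\theta}_{jk}$. Summing over $k=1,\ldots,N$ and adding $\tau_j=\gor{\tau}_j$ shows the two arguments of $\exp(2\pi i\,\cdot\,)$ coincide, which gives the displayed identity; the concluding assertion of the proposition follows because, by the same argument, evaluating the formula (\ref{eq:ClassifyingForumlaHarmonic}) for $h_{\gor\theta,\gor\tau}$ never uses any edge of $\mathcal{N}(\mathcal{B}_\alpha)$ beyond those already recorded in $R_{2\alpha}(L)$.

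There is no serious obstacle here; the only point requiring care is keeping the two zero-extension conventions ($\theta_{jk}=0$ off $R_{2\alpha}(L)$ versus $\gor{\theta}_{jk}=0$ off $\mathcal{N}(\mathcal{B}_\alpha)$) straight, and observing that the support condition on $\varphi_k$ automatically confines the nonzero terms of the sum to edges belonging to both complexes, where the two cochains agree. The remainder is bookkeeping.
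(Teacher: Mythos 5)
Your proof is correct and follows essentially the same route as the paper's: identify the common vertex set $L$ to get $\gor{\tau}_j=\tau_j$, then observe that $\varphi_k(b)\neq 0$ forces $b\in B_\alpha(\ell_j)\cap B_\alpha(\ell_k)$, so $\{\ell_j,\ell_k\}$ lies in both $\mathcal{N}(\mathcal{B}_\alpha)$ and $R_{2\alpha}(L)$ and the two cochains agree there. Your added remark that the arguments of the exponentials are genuinely equal (not merely congruent mod $\Z$) is a small but accurate sharpening of what the paper states.
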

\begin{proof}
Since $\mathcal{N}(\mathcal{B}_\alpha)$ and $R_{2\alpha}(L)$ have the same vertex set,
namely $L$,
then $\gor{\tau} = \tau$ as real-valued functions on $L$.
Moreover, for all $k =1,\ldots, N$ we have that
\[
\varphi_k(b)\gor{\theta}_{jk} = \varphi_k(b)\theta_{jk}
\]
for if $b \notin B_\alpha(\ell_k)$, then both sides are zero, and if $b\in B_{\alpha}(\ell_j) \cap B_{\alpha}(\ell_k)$,
then the edge $\{\ell_j, \ell_k\}$ is in both $R_{2\alpha}(L)$ and $\mathcal{N}(\mathcal{B}_\alpha)$, which shows
that $\gor{\theta}_{jk} = \theta_{jk}$.
\end{proof}

\section{Experiments}\label{sec:Experiments}
In all experiments below,
persistent cohomology is computed using a MATLAB wrapper for Ripser \cite{bauer2017ripser} kindly provided by \href{http://www.ctralie.com/}{Chris Tralie}.
The Moore-Penrose pseudoinverse was computed via MATLAB's \verb"pinv".
In all cases we
run the algorithm from the Introduction \ref{sec:SCCalgorithm} using
the  indicated  persistence classes, or linear combinations thereof as made explicit in each example.

\subsection{Synthetic Data}
\subsubsection{A Noisy Circle} We select 1,000 points from a noisy circle in $\R^2$;
the noise is Gaussian in the direction normal  to the unit circle.
50 landmarks  were selected via \verb"maxmin" sampling (5\% of the data),
and circular coordinates were computed for the two most persistent classes $\eta_1$ and $\eta_2$,
using  (\ref{eq:MP_harmonic}) as input to (\ref{eq:ClassifyingForumlaHarmonic})  --- this is the harmonic cocycle column ---
or  (\ref{eq:ClassifyingForumlaInteger}) with either $\eta_1$ or $\eta_2$ directly --- the integer cocycle column.
We show the results in Figure \ref{fig:example_circle} below.
Computing persistent cohomology took  0.079423 seconds (the Rips filtration is constructed   from zero to the diameter of the landmark set); in each case computing the
harmonic cocycle takes about  0.037294 seconds.
This example highlights the inadequacy of the integer cocycle and of choosing cohomology classes
associated to sampling artifacts (i.e., with low persistence).
From now on, we only present circular coordinates computed with the relevant harmonic cocycle representative.
\begin{figure}[!htb]
  \centering
      \includegraphics[width=\textwidth]{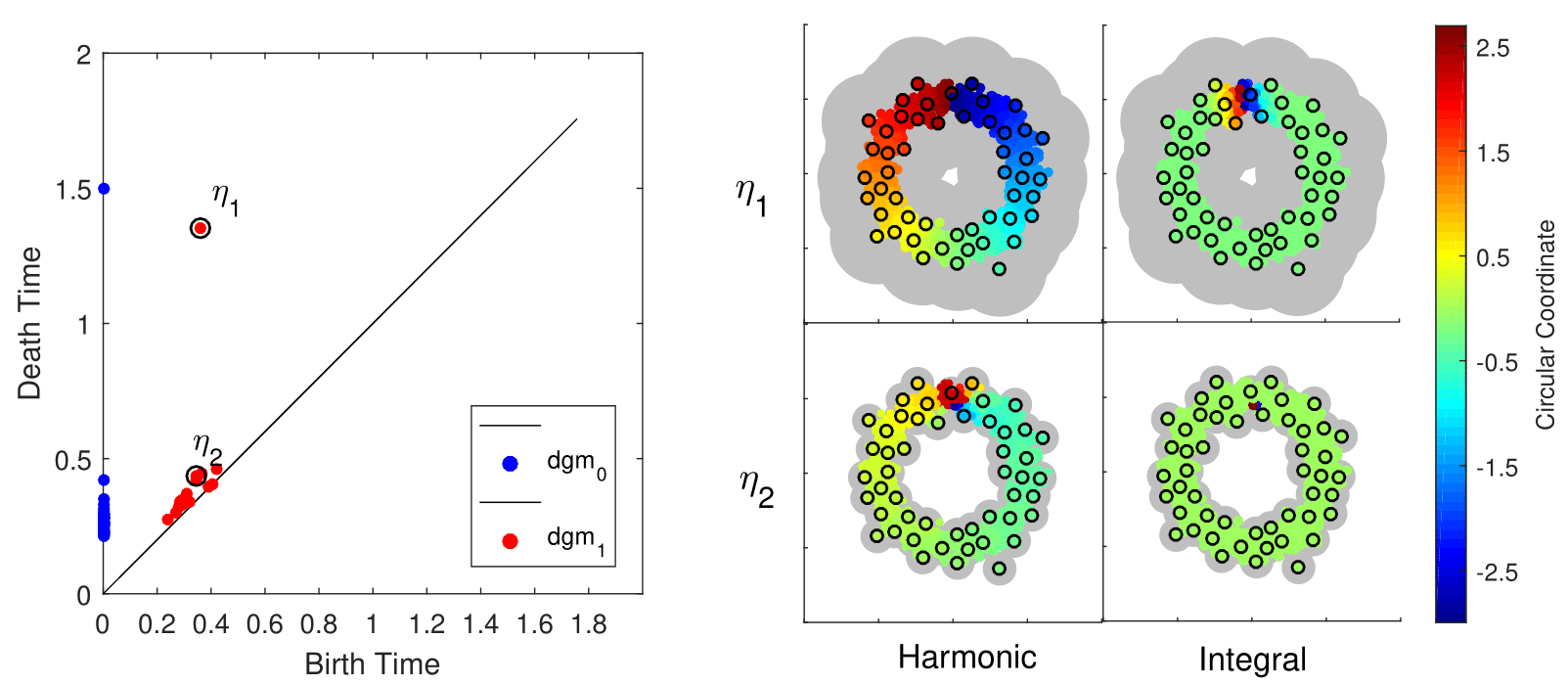}
  \caption{A noisy circle. Left: persistence diagrams in dimension 0  (blue) and 1 (red)
for the Rips filtration on the landmarks. Right: Circular coordinates from the two most persistent
classes $\eta_1$ (top row) and $\eta_2$ (bottom row). The columns indicate if the harmonic or integral cocycle was used.
The dark rings are the  landmarks.
The colors are: the domain of  definition for the circular coordinate (gray), and its value on each point
(dark blue, $-\pi$,  through dark red, $\pi$).
Please refer to an electronic version for colors.}
  \label{fig:example_circle}
\end{figure}

\subsubsection{The 2-Dimensional Torus}
For this experiment we sample 1,000 points uniformly at random from the square $[0, 2\pi)\times [0,2\pi)$,
and for each selected pair $(\phi_1,\phi_2)$ we generate a point
$\left(e^{i\phi_1} , e^{i\phi_2}\right) \in S^1\times S^1 $ on the surface of the torus embedded in $\C^2$.
The resulting finite set is endowed with the ambient distance from $\C^2$,
and 100 landmarks (i.e., 10\% of the data) are selected through \verb"maxmin" sampling.
We show the results in Figure \ref{fig:example_torus} below, for the circular coordinates
computed with the two most persistent classes, $\eta_1$ and $\eta_2$, and the maps (\ref{eq:ClassifyingForumlaHarmonic})
associated to the harmonic cocycle representatives (\ref{eq:MP_harmonic}).
Computing persistent cohomology for the Rips filtration on the Landmarks (from zero to the diameter of the set)
takes 0.398252 seconds, and computing the harmonic cocycles takes  0.030832 seconds.
\begin{figure}[htb!]
  \centering
  \includegraphics[width=\textwidth]{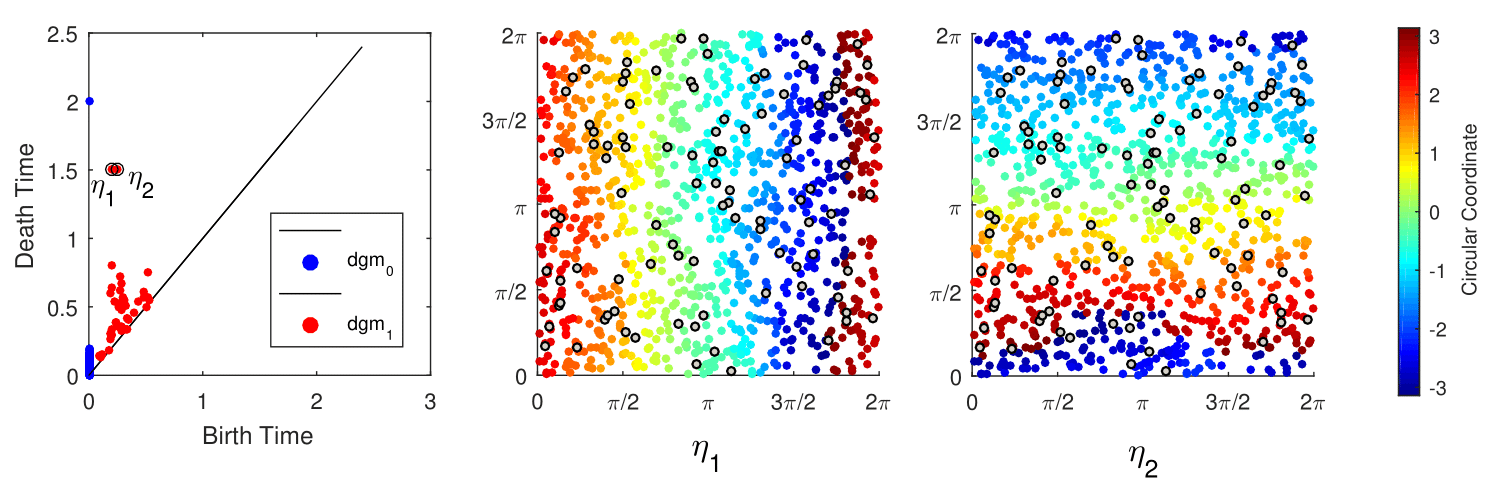}
  \caption{The torus. Left: Persistence in dimensions 0 and 1 for the Rips filtration on the landmark set.
Center and Right: the landmark set is depicted with dark rings, and the colors correspond to the  circular coordinates
computed with (the harmonic representatives from) the two most persistent classes $\eta_1$ (center) and $\eta_2$ (right).
Please refer to an electronic version for colors.}
  \label{fig:example_torus}
\end{figure}

\subsubsection{The Klein Bottle} We model the Klein bottle as the quotient space
\[
K=
S^1\times S^1 / (z,w) \sim (-z ,\bar{w})
\]
and endow it with the quotient metric.
Just like in the case of the 2-torus, we sample 1,000 points uniformly at random on (the fundamental domain $[0,\pi)\times [0,2\pi)$ of) $K$,
 and select 100 landmarks
via \verb"maxmin" sampling and the quotient metric.
Below in Figure \ref{fig:example_klein} we show the results of computing the persistent cohomology, with coefficients in $\Z/13$,  of the Rips filtration
on the landmark set (left), along with the  circular coordinates corresponding to the most persistent class (right).
\begin{figure}[!htb]
  \centering
  \includegraphics[width=.8\textwidth]{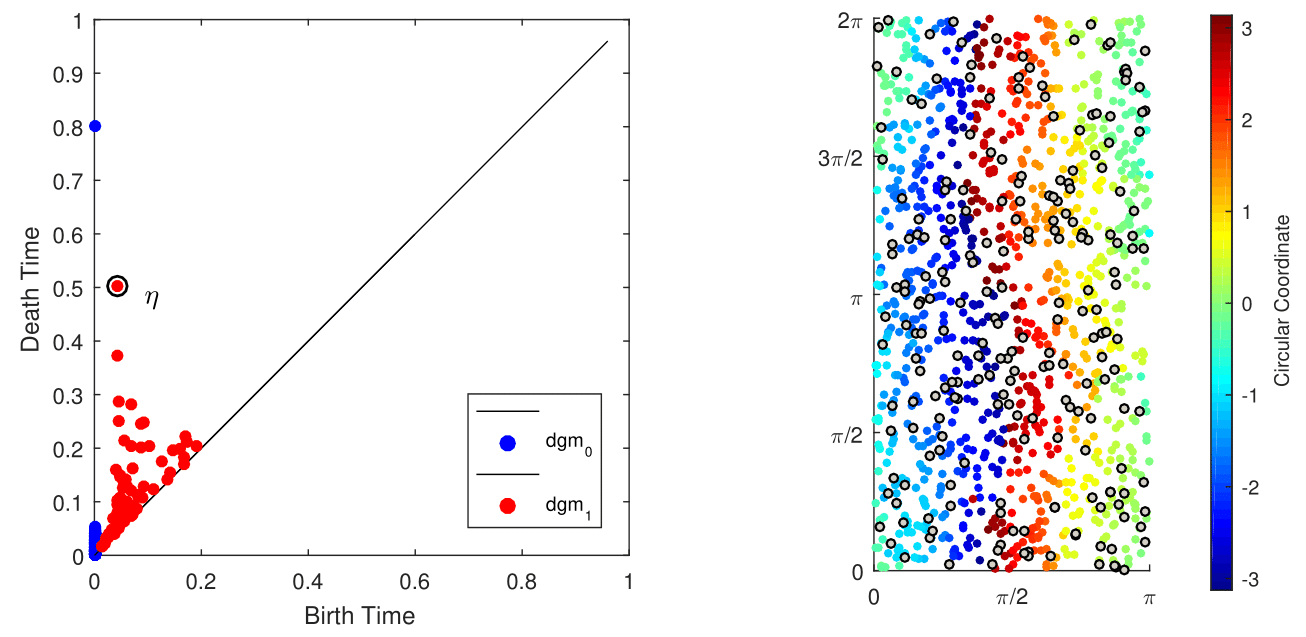}
  \caption{Circular coordinates on the Klein bottle. Left: Persistence with coefficients in $\Z/13$ for the Rips filtration on the landmark set.
Right: Circular coordinates computed from the harmonic representative from  the class $\eta$ with largest persistence.
Dark rings indicate landmarks, and the colors (dark blue through dark red) are the angular values of the circular coordinate on each data point.
Please refer to an electronic version for colors. }\label{fig:example_klein}
\end{figure}

\subsection{Real Data}
\subsubsection{COIL 20}
The Columbia University Image Library (COIL-20) is a collection of $448\times 416$-pixel gray scale images
from 20 objects, each of which is photographed at  72 different rotation angles \cite{nene1996columbia}.
The database has two versions: a processed version, where the images have been cropped to show only the rotated object,
and an unprocessed version with the 72 raw images from 5 objects. We will analyze the unprocessed database,
of which a few examples are shown in Figure \ref{fig:COIL20} below.

\begin{figure}[!htb]
  \centering
  \includegraphics[width=0.6\textwidth]{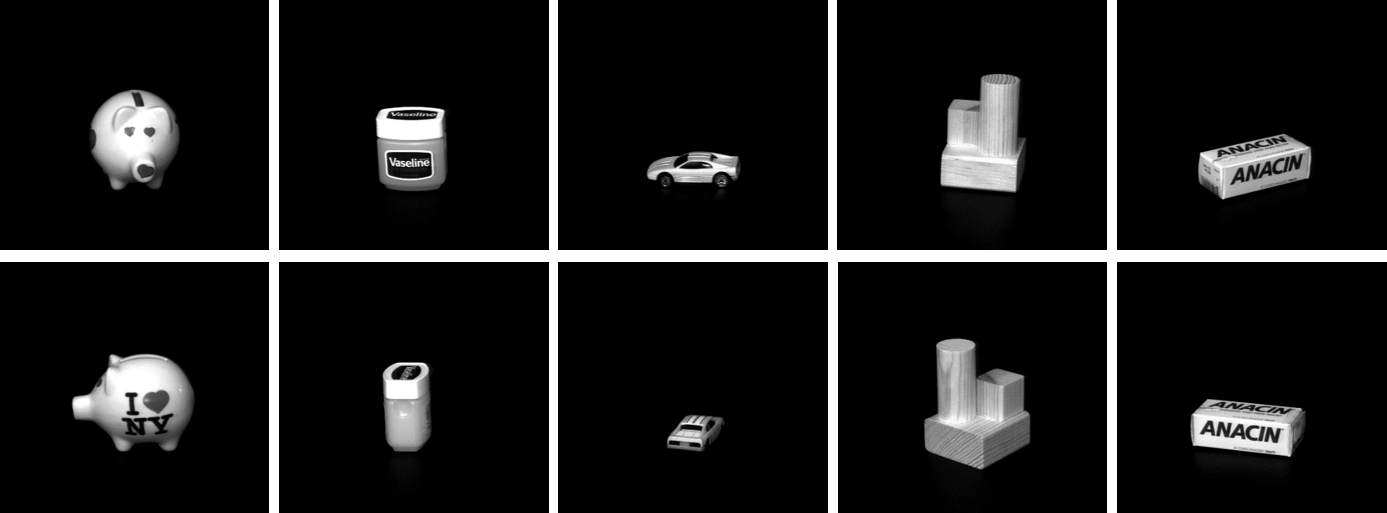}
  \caption{Some examples from the unprocessed COIL20 image database}\label{fig:COIL20}
\end{figure}

Regarding each image as a vector of pixel intensities in $\R^{448\times 416}$
yields a  set $X$ with 360 points; this set becomes a finite metric space when endowed with the ambient Euclidean distance.
Below in Figure \ref{fig:coil20_iso_circular} (left) we show the result of computing persistence
(this time visualized as barcodes) for the Rips complex on the entire data set (0.293412 seconds).
Each one of the six most persistent classes $\eta_1,\ldots, \eta_6$ yields
a circle-valued map on the data $h_j : X \longrightarrow S^1$, $j=1,\ldots, 6$.
Multiplying these maps together, using the group structure from $S^1 \subset \C$,
yields a map $h : X \longrightarrow S^1$. We do this at the level of maps, as opposed to adding up the cocycle representatives,
because there is no scale $\alpha$ at which all these classes are alive.
We also show in Figure \ref{fig:coil20_iso_circular} (right)
an Isomap \cite{tenenbaum2000global} projection of the data onto $\R^2$, and we color each projected data point with its $h$ value.

\begin{figure}[!htb]
  \centering
  \includegraphics[width=\textwidth]{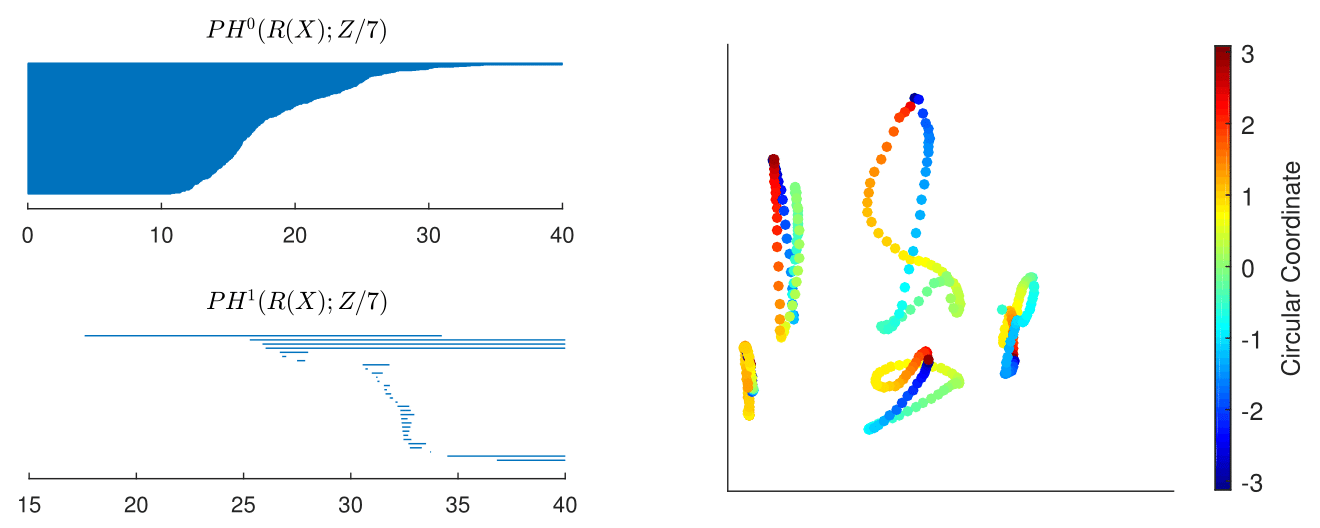}
  \caption{COIL-20 Unprocessed. Left: Persistence of the Rips filtration. Right: Isomap projection colored by circular coordinate.}
  \label{fig:coil20_iso_circular}
\end{figure}

As we show in Figure \ref{fig:COIL20ClustCirc} below, a better system of coordinates for the data (i.e. one without crossings)
is given by the computed circular coordinate of each data point, and the
cluster (computed using single linkage) to which it belongs to.

\begin{figure}[!htb]
  \centering
  \includegraphics[width=0.8\textwidth]{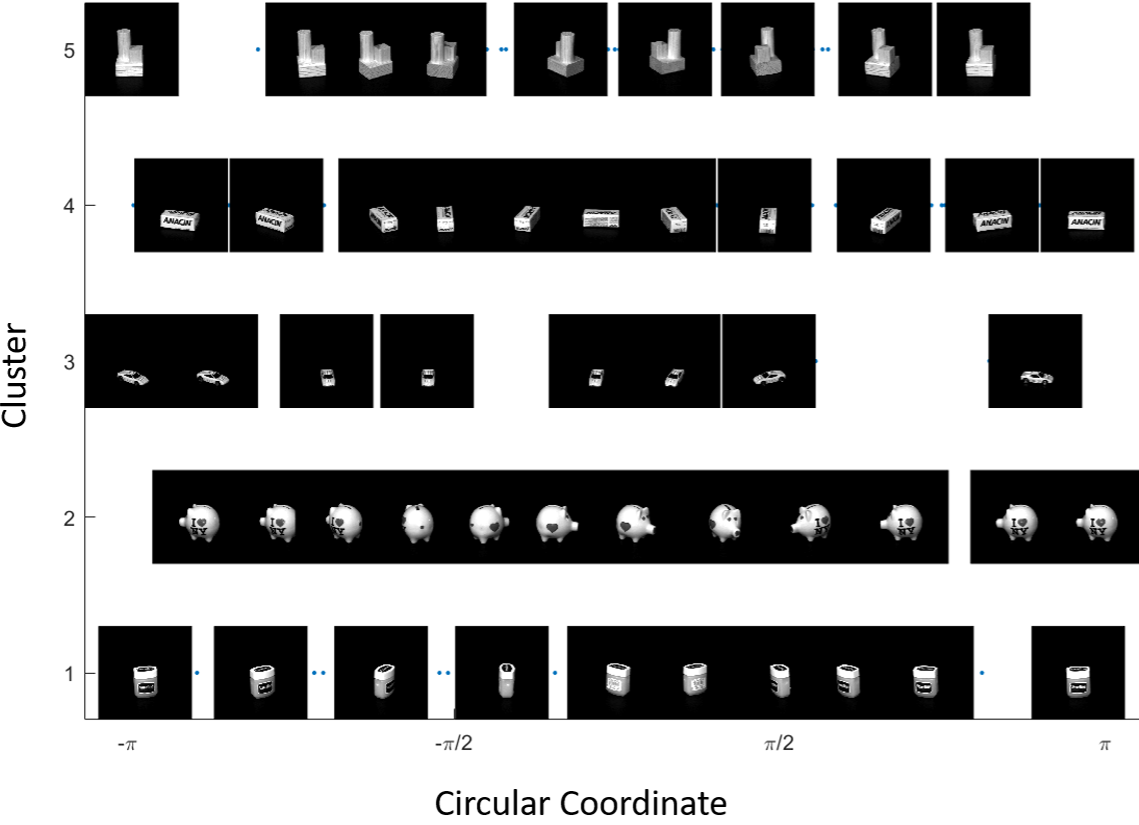}
  \caption{COIL-20 Unprocessed: Clusters vs Circular coordinates}
  \label{fig:COIL20ClustCirc}
\end{figure}

\subsubsection{The Mumford Data}
This data set was first introduced in \cite{lee2003nonlinear},
with an initial exploration  of its underlying topological structure done in \cite{de2004topological},
and then a more thorough investigation in  \cite{carlsson2008local}.
The data set in question is a collection of roughly 4 million $3\times 3$-pixel
gray-scale  images with high-contrast, selected from monochrome photos in a database of 4,000 natural scenes \cite{hateren_schaaf_1998}.
The $3\times 3$-pixel image patches are preprocessed, intensity-centered and contrast-normalized, and a linear change of coordinates is performed yielding a point-cloud
$\mathcal{M} \subset S^7 \subset \R^8$.
The Euclidean distance in $\R^8$ endows $\mathcal{M}$ with the structure of a finite metric space.
Following \cite{carlsson2008local}, we select $50,000$ points at random from $\mathcal{M}$
and then let $X$ be the top $30\%$ densest points as measured  by the distance to their 15th nearest neighbor.
This results in a data set with 15,000 points, which we analyze below.

We select 700 landmarks from $X$ via \verb"maxmin" sampling, i.e. 4.7\% of $X$,
and compute  persistence for the associated Rips filtration.
This takes about 2.2799 seconds and the result is shown in Figure \ref{fig:three_circ_Barcodes}.
\begin{figure}[htb!]
  \centering
  \includegraphics[width=.9\textwidth]{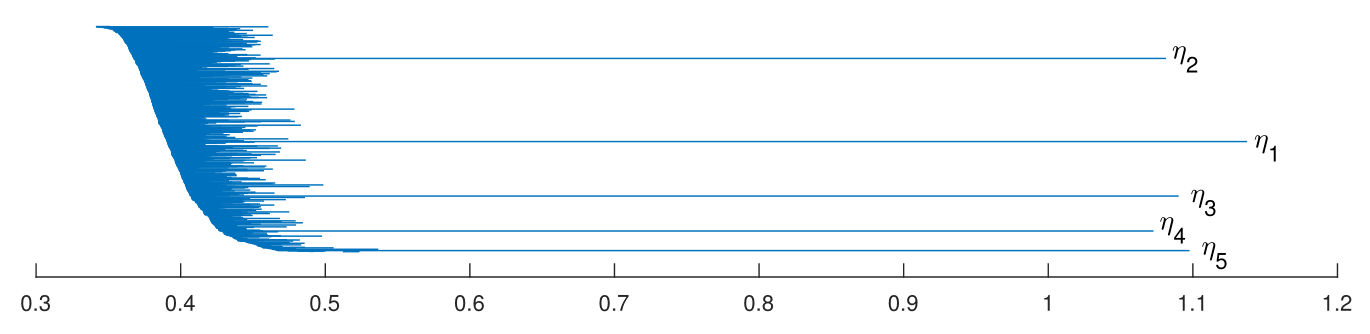}
  \caption{Barcodes from persistence on the Rips filtration of the landmark set $L\subset X$.}\label{fig:three_circ_Barcodes}
\end{figure}

\noindent Each bar in the barcode yields a class $\eta_j$, which we order from largest ($\eta_1$) to smallest  ($\eta_5$) persistence.
Below in Figure \ref{fig:three_circ_colors} we show the circular coordinates
associated to the classes $\eta_2$, $\eta_1 + \eta_5$ and $\eta_3 + \eta_4$, respectively.
Each of the three panels shows a scatter plot of $X \subset \R^8$ with respect to the first two coordinates,
dark rings are the selected landmarks, and the colors (dark blue through dark red) are the circular coordinates
corresponding to the indicated persistence classes. The computation of each cocycle representative takes about 7.1434 seconds, so
the entire analysis is less than 25 seconds.
\begin{figure}[htb!]
  \centering
  \includegraphics[width=0.9\textwidth]{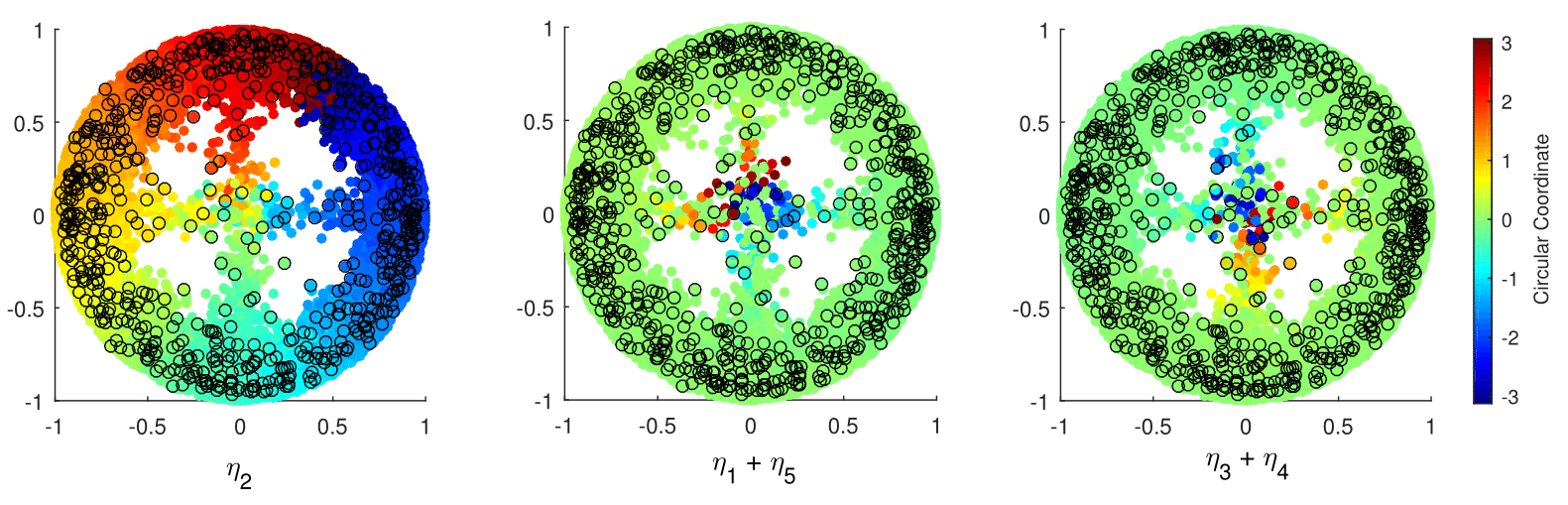}
  \caption{Circular coordinates for the points in $X\subset \R^8$, plotted  according to their first two coordinates, and colored by
the circular coordinates associated to each one of the classes $\eta_2$ (left), $\eta_1 + \eta_5$ (center) and $\eta_3 + \eta_4$ (right).}
  \label{fig:three_circ_colors}
\end{figure}

These three circular coordinates allow us to map the data set $X$ into the 3-dimensional torus $T^3 = S^1\times S^1\times S^1$,
which we model as the 3-dimensional cube $[-\pi, \pi] \times [-\pi, \pi] \times [-\pi , \pi]$ with opposite faces identified.
We show in Figure \ref{fig:three_circ_coords} below the result of mapping the data into $T^3$.
\begin{figure}[htb!]
  \centering
  \includegraphics[width=\textwidth]{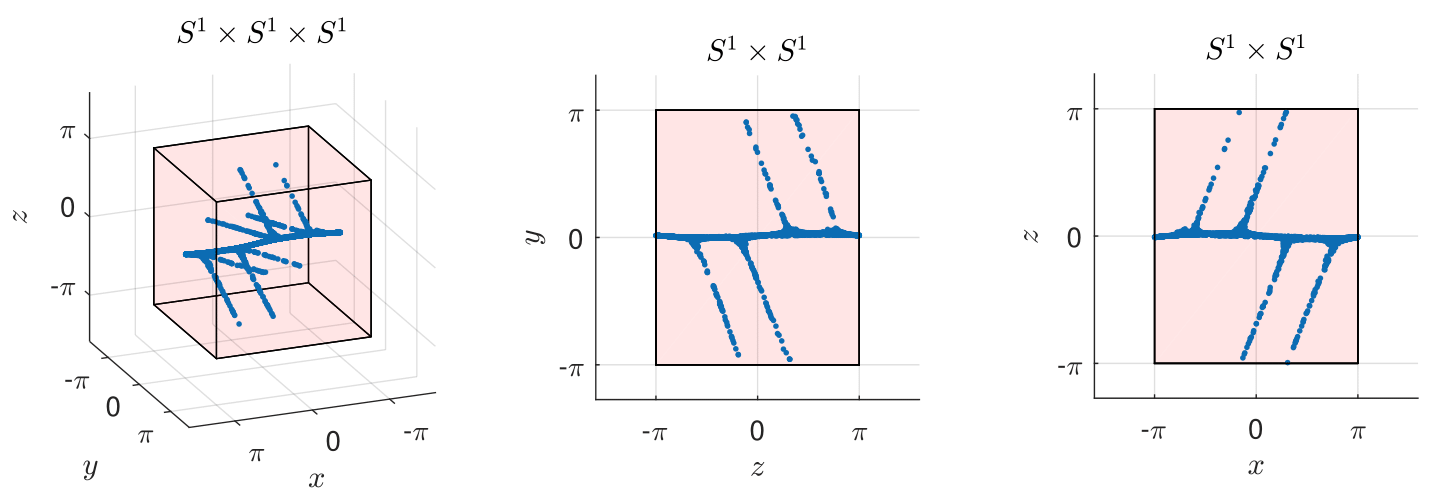}
  \caption{Scatter plot in the 3-torus (left) for $X$, along with two 2-d projections (center, left).
    The horizontal line on the $xy$ plane is a circle (the primary circle), and each one of the four $V$-shaped curves in $T^3$
is a hemisphere of a (secondary) circle.}
  \label{fig:three_circ_coords}
\end{figure}

As we can see from the scatter plot, these three coordinates provide a faithful realization of the data
in the three circle model proposed in \cite{carlsson2008local}.
Below in Figure \ref{fig:patches_three_circ_model} we show some of these image patches
in their $T^3$-coordinate to better illustrate what the actual circles are.

\begin{figure}[htb!]
  \centering
  \includegraphics[width=.6\textwidth]{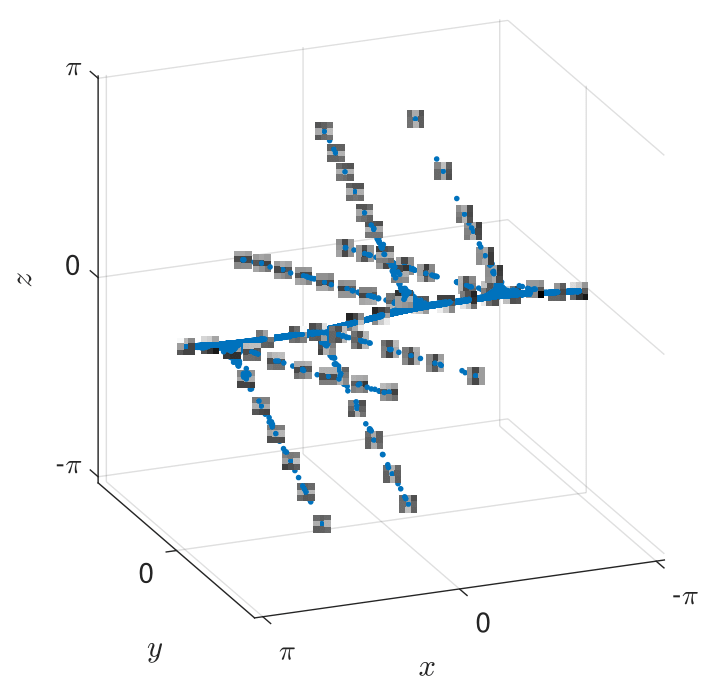}
  \caption{Image patches from $X$, plotted at their location in the 3-torus, according to the computed circular coordinates.}
  \label{fig:patches_three_circ_model}
\end{figure}

\section{Discussion}\label{sec:FinalRemarks}
We have presented in this paper an application of the theory of principal bundles
to the problem of finding topologically and geometrically meaningful coordinates for scientific data.
Specifically, we leverage the 1-dimensional persistent cohomology of the Rips filtration on
a subset of the data (the landmarks), in order to produce $S^1$-valued coordinates on the entire data set.
The coordinates are designed to capture 1-dimensional topological features of a continuous underlying space,
and the theory on which the coordinates are built, indicates that they classify $\Z$-principal bundles on the continuum.

The use of bundle theory allows for the circular coordinates to be sparse,
which is fundamental for analyzing geometric data of  realistic size.
We hope that these coordinates will be useful in problems such as
the analysis of recurrent dynamics in time series data (as in \cite{xu2018twisty}, \cite{tralie2018topological} or \cite{de2012topological}),
and nonlinear dimensionality reduction as indicated in the Experiments section \ref{sec:Experiments}.

An interesting direction from this work is the question of stability and Lipschitz continuity of sparse circular coordinates.
The main theoretical challenge is to determine how the edge and vertex weights on the Rips complex
can be used to stabilize the harmonic cocycle representative with respect to an appropriate notion of
(hopefully Hausdorff) noise on the landmark set. We hope to address this question in upcoming work.

\end{document}